\newcommand{\bex}{\begin{eqnarray*}}
\newcommand{\eex}{\end{eqnarray*}}
\newcommand{\be}{\begin{eqnarray}}
\newcommand{\ee}{\end{eqnarray}}
\newtheorem{remark}{Remark}
\newtheorem{theorem}{Theorem}[section]
\newtheorem{corollary}[theorem]{Corollary}
\newtheorem{lemma}[theorem]{Lemma}
\newtheorem{definition}[theorem]{Definition}
\begin{document}
\title[ Polynomial approximation in higher-order weighted Dirichlet spaces]
	{Polynomial approximation in higher-order weighted Dirichlet spaces}

	\author{Li He}
	\address{Li He: School of Mathematics and Information Science, Guangzhou University, Guangzhou 510006, China.}
	\email{helichangsha1986@163.com}
	
	\author{Yuanhao Yan*}
	\address{Yuanhao Yan: School of Mathematics and Information Science, Guangzhou University, Guangzhou 510006, China.}
	\email{18322912287@163.com}

	\thanks{2020 Mathematics Subject Classification: 32E30, 32A37}

	\thanks{Key words: weighted Dirichlet space,  summability, Fej\'er's theorem}

	\thanks{Li He is supported by NNSF of China (Grant No. 12371127)}
    \thanks{*Corresponding author, e-mail: 18322912287@163.com}

\begin{abstract}

Fej\'er's theorem guarantees norm convergence of Ces\`aro means of Taylor partial sums in the Hardy space, whereas such convergence generally fails in weighted Dirichlet-type spaces, especially in the higher-order setting.
In this paper, we investigate summability problems in higher-order weighted Dirichlet spaces $\widehat{\mathcal{H}}_{\mu,m}$ and show that Taylor partial sums are not uniformly bounded in these spaces and may therefore diverge in norm. To restore convergence, we introduce a family of modified polynomials whose coefficients are adjusted by a suitable weight array.
Under mild boundedness and variation assumptions on the weights, we establish norm convergence of the modified sums via a coefficient correspondence principle and a Local Douglas formula.
As an application, when the weight measure $\mu$ is a finite sum of Dirac point masses, explicit formulas for the modified coefficients are obtained, yielding a Fej\'er-type summability theorem for higher-order weighted Dirichlet spaces.

\end{abstract}

\maketitle
	\setcounter{section}{0}
\section{Introduction}

Let $\mathbb{D}$ denote the open unit disk in the complex plane and let $H(\mathbb{D})$ be the space of all analytic functions on $\mathbb{D}$.
For $f(z) = \sum_{k=0}^{\infty} a_k z^k \in H(\mathbb{D})$, denote by
\[
S_n f(z) = \sum_{k=0}^{n} a_k z^k
\]
the $n$-th Taylor partial sum of $f$.
A classical result of Fej\'er asserts that, in the Hardy space $H^2$, the Ces\`aro means of $S_n f$ converge to $f$ in norm.
This theorem plays a fundamental role in complex analysis and harmonic analysis.

Beyond the Hardy space, norm convergence of Taylor partial sums becomes more subtle.
In the Dirichlet space and its weighted variants, it is well known that Taylor partial sums may fail to converge in norm.
Various summability methods have therefore been introduced to recover convergence, including Ces\`aro and Abel-type means.
Such questions have attracted sustained attention in the study of function spaces and operator theory.

Let $\mathbb{N}$ denote the set of nonnegative integers and let $\mathbb{N}^+$ denote the set of positive integers, and let $\mathcal{M}_+(\mathbb{T})$ denote the set of all finite nonnegative Borel measures on the unit circle $\mathbb{T}$.

For $\mu \in \mathcal{M}_+(\mathbb{T})$, $f \in H(\mathbb{D})$, and $m \in \mathbb{N}^+$, the \emph{weighted Dirichlet-type integral of order $m$} is defined by
\[
D_{\mu,m}(f)
:= \frac{1}{m!(m - 1)!}
\int_{\mathbb{D}} \left| f^{(m)}(z) \right|^2 P_\mu(z)\,(1 - |z|^2)^{m - 1} \, dA(z),
\]
where $dA(z)$ denotes the normalized Lebesgue area measure on $\mathbb{D}$, $f^{(m)}$ is the $m$-th derivative of $f$, and $P_\mu$ is the Poisson integral of $\mu$, given by
\[
P_\mu(z) := \int_{\mathbb{T}} \frac{1 - |z|^2}{|z - \zeta|^2} \, d\mu(\zeta),
\quad z \in \mathbb{D}.
\]

For each $m \in \mathbb{N}$ and nonzero $\mu \in \mathcal{M}_+(\mathbb{T})$, we define
\[
\mathcal{H}_{\mu,m}
:= \left\{ f \in H(\mathbb{D}) : D_{\mu,m}(f) < \infty \right\},
\]
which is a semi-inner product space endowed with the seminorm $\sqrt{D_{\mu,m}(\cdot)}$.

If $\mu = \delta_\lambda$ is the Dirac measure at a point $\lambda \in \mathbb{T}$, the resulting space $\mathcal{H}_{\mu,m}$ is denoted by $\mathcal{H}_{\lambda,m}$ and is referred to as the \emph{local Dirichlet space of order $m$ at $\lambda$}.
If $\mu = 0$, then $\mathcal{H}_{\mu,m} = H^2$ for every $m \in \mathbb{N}$, where $H^2$ denotes the classical Hardy space on $\mathbb{D}$.

In the special case where $\mu = \sigma$, the normalized Lebesgue measure on $\mathbb{T}$, a direct computation shows that for any
$f(z) = \sum_{k = 0}^\infty a_k z^k \in H(\mathbb{D})$,
\[
D_{\sigma,m}(f) = \sum_{k = m}^{\infty} \binom{k}{m} |a_k|^2,
\quad m \in \mathbb{N},
\]
where the binomial coefficient is defined by
$\binom{k}{m} := \frac{k!}{m!(k - m)!}$ for $k \ge m$.

Associated with $\mathcal{H}_{\mu,m}$, we introduce the linear space
\[
\widehat{\mathcal{H}}_{\mu,m}
:= \left\{
f(z) = \sum_{j = m}^{\infty} a_j z^j : D_{\mu,m}(f) < \infty
\right\}.
\]
For any nonzero $\mu \in \mathcal{M}_+(\mathbb{T})$, the functional $D_{\mu,m}^{1/2}(\cdot)$ defines a seminorm on $\mathcal{H}_{\mu,m}$.
Moreover, $D_{\mu,m}(f) = 0$ if and only if $f$ is a polynomial of degree at most $m-1$.
In contrast, the same functional induces a norm on $\widehat{\mathcal{H}}_{\mu,m}$, which therefore becomes a Hilbert space.

Following earlier works, the higher-order weighted Dirichlet space
$\widehat{\mathcal{H}}_{\mu,m}$ introduced above
serves as a natural framework for studying higher-order differentiation
and composition operators. In recent years, norm convergence problems in weighted Dirichlet spaces have been extensively studied.
For the first-order case $m=1$, several authors have obtained positive summability results under appropriate assumptions on the weights.
In particular, Fricain and Mashreghi developed a local approach based on Douglas' formula, which has proved effective in treating weighted Dirichlet norms.
However, these methods can't be directly extended to the higher-order setting.
One major obstacle is that, for $m \ge 2$, Taylor partial sums generally fail to converge in the norm of $\widehat{\mathcal{H}}_{\mu,m}$, even for functions with strong regularity.

The first goal of this paper is to demonstrate this failure of convergence in a precise way.
We show that the sequence of Taylor partial sums is not uniformly bounded in $\widehat{\mathcal{H}}_{\mu,m}$, which implies, by the Uniform Boundedness Principle, the existence of functions whose partial sums diverge in norm.
This phenomenon highlights a fundamental difference between higher-order weighted Dirichlet spaces and their first-order counterparts.

Motivated by Fej\'er's theorem, our second and main objective is to construct a summability method that restores norm convergence.
To this end, we introduce a family of modified polynomials whose coefficients are adjusted by a carefully designed weight array.
Roughly speaking, the weights approximate the identity while controlling oscillation along each row.
Under mild boundedness and variation conditions, we prove that these modified sums converge in the $\widehat{\mathcal{H}}_{\mu,m}$-norm.

Our approach combines a coefficient correspondence principle with the Local Douglas formula.
This allows us to reduce global norm estimates to local ones and to obtain quantitative control on the modified coefficients.
As a concrete application, when the measure $\mu$ is a finite sum of Dirac point masses, we derive explicit formulas for the modified coefficients, yielding a constructive summability scheme.

To some extent, the results obtained in this paper provide a Fej\'er-type summability theorem in the setting of higher-order weighted Dirichlet spaces.

The paper is organized as follows. In Section~2, we recall basic properties of higher-order weighted Dirichlet spaces and establish preliminary estimates.
We also set an counterexample to show the failure of norm convergence of Taylor partial sums.
In Section~3, we introduce the modified summation method and prove the main convergence theorem.
Finally, in Section~4, we consider the case of Dirac measures and present explicit coefficient formulas.

\section{Bridging Formulae and Counterexamples}

Weighted Dirichlet spaces exhibit substantial deviations from classical polynomial approximation theory. Although the local Douglas formula provides a precise structural decomposition of functions in terms of boundary behavior and derivatives, and the coefficient correspondence theorem connects Taylor coefficients with weighted Dirichlet norms, the associated Taylor partial sums are in general not stable in higher-order weighted settings. This discrepancy is manifested through explicit counterexamples, revealing a fundamental obstruction to norm convergence of usual polynomial approximations. The purpose of this section is to clarify this phenomenon by combining structural identities with concrete divergence examples, thereby laying the groundwork for the summability methods developed in subsequent sections.

\subsection{Local Douglas formula for higher-order weighted Dirichlet-type integrals}

Our analysis relies on two fundamental tools.
The first is the local Douglas formula, which characterizes higher-order weighted Dirichlet spaces through factorization near boundary points.
The second is the coefficient correspondence theorem, which translates Dirichlet-type norms into weighted $\ell^2$ conditions on Taylor coefficients.
Together, these results reveal the structural origin of instability in polynomial approximations.

\begin{lemma}\label{lm2.1}
Let \( m \in \mathbb{N}^{+} \), \( \lambda \in \mathbb{T} \), and \( f \in H(\mathbb{D}) \).
Then \( f \in \mathcal{H}_{\lambda,m} \) if and only if
\[
f(z) = \alpha + (z - \lambda)g(z)
\]
for some \( \alpha \in \mathbb{C} \) and \( g \in \mathcal{H}_{\sigma,m-1} \).
Moreover, in this case the following statements hold:
\begin{itemize}
\item[(i)] \( D_{\lambda,m}(f) = D_{\sigma,m-1}(g) \);
\item[(ii)] \( f \) admits a non-tangential limit at \( \lambda \), given by
\[
f^*(\lambda) = \lim_{\substack{z \to \lambda \\ z \in \Omega_\lambda}} f(z) = \alpha,
\]
where the approach region
\[
\Omega_\lambda := \left\{ z \in \mathbb{D} : |z - \lambda| < \kappa (1 - |z|^2)^{1/2} \right\}
\]
is an oricyclic neighborhood of \( \lambda \) for arbitrary \( \kappa > 0 \).
\end{itemize}
\end{lemma}

\begin{proof}
A detailed proof can be found in \cite{MR4588160}.
\end{proof}

This lemma not only unifies several classical descriptions of weighted Dirichlet-type spaces, but also exposes a key obstruction: despite their natural algebraic form, Taylor polynomials do not necessarily respect the underlying norm structure.

\subsection{Failure of usual polynomial approximation}

\begin{lemma}\label{lm3.1}
Let $m,n \in \mathbb{N}^{+}$ and let
\[
f(z) = \sum_{k=m}^{\infty} a_k z^k \in \widehat{\mathcal{H}}_{1,m}.
\]
Then its Taylor partial sum
\[
S_{m,n}f(z) := \sum_{k=m}^{m+n} a_k z^k
\]
also belongs to $\widehat{\mathcal{H}}_{1,m}$.
\end{lemma}

\begin{proof}
By Lemma~\ref{lm2.1}, there exists a function
\[
g(z) = \sum_{k=0}^{\infty} b_k z^k \in \mathcal{H}_{\sigma,m-1}
\]
such that
\[
f(z) = a + (z-1)g(z).
\]
The coefficients satisfy
\[
\begin{cases}
b_0 = b_1 = \cdots = b_{m-1} = a,\\
b_{k-1} - b_k = a_k, \quad k \ge m.
\end{cases}
\]
A direct computation yields
\begin{align*}
S_{m,n}f(z)
&= a - b_{m+n} + (z-1)\sum_{k=0}^{m+n-1} (b_k - b_{m+n})z^k.
\end{align*}
Applying Lemma~\ref{lm2.1} again, we conclude that $S_{m,n}f \in \widehat{\mathcal{H}}_{1,m}$.
\end{proof}

\begin{theorem}\label{tm3.2}
Let \( m \in \mathbb{N} \) and let \( \mu \) be a positive finite Borel measure on \( \overline{\mathbb{D}} \).
Then there exists a function \( f \in \widehat{\mathcal{H}}_{\mu,m} \) such that
\[
D_{\mu,m}(S_{m,n}f - f) \not\to 0.
\]
\end{theorem}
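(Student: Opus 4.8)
The plan is to use the local Douglas formula: for a point mass $\mu=\delta_\lambda$ it turns $D_{\mu,m}(S_{m,n}f-f)$ into a completely explicit series in the Taylor coefficients of $f$, after which a ``condensation of singularities'' choice of coefficients finishes the job. It suffices to produce the failure for a point mass (the prototypical case, to which Lemmas \ref{lm2.1} and \ref{lm3.1} are tailored); after a rotation we may take $\lambda=1$, so the ambient space is $\widehat{\mathcal H}_{1,m}$, and by Lemma \ref{lm3.1} both $f$ and $S_{m,n}f$ lie in $\widehat{\mathcal H}_{1,m}$, hence so does $S_{m,n}f-f=-\sum_{k>n+m}a_kz^k$.

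First I would record an exact formula for $D_{1,m}(S_{m,n}f-f)$. Write $f=a+(z-1)g$ with $g(z)=\sum_{k\ge0}b_kz^k\in\mathcal H_{\sigma,m-1}$ as in the proof of Lemma \ref{lm3.1} (so $b_0=\dots=b_{m-1}=a$ and $b_{k-1}-b_k=a_k$ for $k\ge m$). Subtracting $f$ from the identity for $S_{m,n}f$ obtained there, the constant $a$ cancels and one gets
\[
S_{m,n}f-f=-b_{n+m}+(z-1)\Big(-b_{n+m}\sum_{k=0}^{n+m-1}z^k-\sum_{k\ge n+m}b_kz^k\Big),
\]
which is of the form required by Lemma \ref{lm2.1} (the term in brackets is a polynomial plus a tail of $g$, hence lies in $\mathcal H_{\sigma,m-1}$). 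Applying Lemma \ref{lm2.1}(i), together with $D_{\sigma,m-1}\big(\sum_k c_kz^k\big)=\sum_k\binom{k}{m-1}|c_k|^2$ and the hockey-stick identity $\sum_{k=0}^{n+m-1}\binom{k}{m-1}=\binom{n+m}{m}$, yields
\[
D_{1,m}(S_{m,n}f-f)=\binom{n+m}{m}\,|b_{n+m}|^2+\sum_{k\ge n+m}\binom{k}{m-1}\,|b_k|^2 .
\]
Since $f\in\widehat{\mathcal H}_{1,m}$ means exactly $\sum_{k\ge m-1}\binom{k}{m-1}|b_k|^2=D_{1,m}(f)<\infty$, the second term is the tail of a convergent series and tends to $0$; everything therefore hinges on arranging $\binom{n+m}{m}|b_{n+m}|^2\not\to 0$ along some subsequence while keeping $\sum_k\binom{k}{m-1}|b_k|^2$ finite. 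This is possible because of the mismatch $\binom{n+m}{m}=\tfrac{n+1}{m}\binom{n+m}{m-1}$: the norm of $f$ only controls $\binom{k}{m-1}|b_k|^2$, whereas the truncation error carries an extra linear factor.

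The construction is then immediate. Fix $i_0$ with $2^{i_0}>m$; put $b_k=0$ for every $k$ except at $k=2^i$ with $i\ge i_0$, where $b_{2^i}$ is chosen so that $\binom{2^i}{m-1}|b_{2^i}|^2=2^{-i}$. Then $\sum_k\binom{k}{m-1}|b_k|^2=\sum_{i\ge i_0}2^{-i}<\infty$, so $g\in\mathcal H_{\sigma,m-1}$ and $f:=(z-1)g\in\widehat{\mathcal H}_{1,m}$; explicitly $f$ has the ``doublet'' expansion $f(z)=\sum_{i\ge i_0}b_{2^i}\big(z^{2^i+1}-z^{2^i}\big)$, a series supported in degrees $>m$. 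Evaluating the displayed formula at $n=2^i-m$ gives
\[
D_{1,m}(S_{m,2^i-m}f-f)\ \ge\ \binom{2^i}{m}\,|b_{2^i}|^2=\frac{2^i-m+1}{m}\cdot 2^{-i},
\]
which tends to $1/m\neq 0$ as $i\to\infty$; hence $D_{1,m}(S_{m,n}f-f)\not\to 0$. A finite combination $\mu=\sum_l w_l\delta_{\lambda_l}$ is handled by the same scheme, replacing the factor $z-1$ by $\prod_l(z-\lambda_l)$ and the exponents $2^i$ by a sufficiently lacunary sequence $q_i$, so that each ``block'' of the Taylor series is annihilated simultaneously by all the relevant local Douglas decompositions, reducing the bookkeeping to the single-point computation above. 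The hard part is conceptual rather than computational: one must notice that when $S_{m,n}f-f$ is put in the form $\alpha+(z-1)h$ the constant $\alpha$ contributes $\binom{n+m}{m}|\alpha|^2$ to the error norm but only $\binom{n+m}{m-1}|\alpha|^2$ to $D_{1,m}(f)$, and then choose the coefficients \emph{sparse}, not merely small, so that this linear gain does not wash out in the limit.
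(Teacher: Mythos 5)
Your argument is correct for the point-mass case and is genuinely different from the paper's. The paper tests the operators $S_{m,n}$ against the explicit functions $f_n(z)=nz^{n+m+1}-(n+1)z^{n+m}+z^m$, shows via Lemma \ref{lm2.1} that $\|S_{m,n}f_n\|_{1,m}/\|f_n\|_{1,m}\to\infty$, and then invokes the uniform boundedness principle to obtain a non-explicit $f$ with $\sup_n\|S_{m,n}f\|_{\mu,m}=\infty$. You instead derive the exact identity $D_{1,m}(S_{m,n}f-f)=\binom{n+m}{m}|b_{n+m}|^2+\sum_{k\ge n+m}\binom{k}{m-1}|b_k|^2$ and exhibit a single lacunary $f$ realizing the divergence; I checked the decomposition of $S_{m,n}f-f$, the hockey-stick summation, and the limit $\binom{2^i}{m}|b_{2^i}|^2\to 1/m$, and all are sound. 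Both proofs ultimately exploit the same mismatch $\binom{n+m}{m}=\frac{n+1}{m}\binom{n+m}{m-1}$ (it is exactly what drives the paper's ratio $\|S_{m,n}f_n\|/\|f_n\|\sim\sqrt{n/m}$), but yours buys an explicit counterexample with a quantitative lower bound $\liminf\ge 1/m$ along $n=2^i-m$, whereas Banach--Steinbaus buys brevity and, implicitly, genericity of the divergence. One caveat you share with the paper: the reduction to a point mass is not a genuine reduction for arbitrary positive finite Borel $\mu$ --- indeed for $\mu=\sigma$ the quantity $D_{\sigma,m}(S_{m,n}f-f)=\sum_{k>n+m}\binom{k}{m}|a_k|^2$ is a tail of a convergent series and always tends to $0$, so the theorem as literally stated cannot hold for all $\mu$; your closing sketch for finite combinations of Dirac masses is plausible but would need the interpolation bookkeeping spelled out, though the paper's own final step (passing from operator norms on $\widehat{\mathcal{H}}_{1,m}$ to a conclusion in $\widehat{\mathcal{H}}_{\mu,m}$) is no more rigorous on this point.
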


\begin{proof}
The idea is to construct an explicit sequence for which the operator norms of the partial sum operators blow up, and then invoke the Uniform Boundedness Principle. Consider the linear map
\begin{align*}
S_{m,n}: &\widehat{\mathcal{H}}_{1,m} \longmapsto \widehat{\mathcal{H}}_{1,m},\\
          &\quad f \longmapsto S_{m,n}(f).
       \end{align*}
Let \( f_n(z) = n z^{n + m + 1} - (n + 1) z^{n + m} + z^m \) and $(S_{m,n}f_n)(z) = - (n + 1) z^{n + m} + z^m.$
Note that
\begin{align*}
f_n(z) &= n z^{n + m + 1} - (n + 1) z^{n + m} + z^m \\
       &= (z - 1) \left( n z^{n + m} - z^m - z^{m + 1} - \cdots - z^{m + n - 1} \right) \\
       &= (z-1){g_n}(z) \in \widehat{\mathcal{H}}_{1,m} \\
       \end{align*}
       and
\begin{align*}
(S_{m,n}f_n)(z)
&= - (n + 1) z^{n + m} + z^m \\
&= - n - (n + 1)(z - 1)(1 + z + \cdots + z^{m + n - 1}) \\
&\quad + (z - 1)(1 + z + \cdots + z^{m - 1}) \\
&= - n - (z - 1){v_n}(z) \in \widehat{\mathcal{H}}_{1,m},
\end{align*}
where
\begin{align*}
{g_n}(z)&:=n z^{n + m} - z^m - z^{m + 1} - \cdots - z^{m + n - 1}
\end{align*}
and
\begin{align*}
{v_n}(z)&:= (n + 1)(1 + z + \cdots + z^{m + n - 1})+ (1 + z + \cdots + z^{m - 1}) \\
&= n (1 + z + \cdots + z^{m - 1})+(n+1) (z^{m} + z^{m + 1} + \cdots + z^{m+n - 1}).
\end{align*}
Applying Lemma \ref{lm2.1}, we deduce that
\begin{align*}
D_{1,m}(S_{m,n}f_n)&= D_{\sigma ,m - 1} ({v_n}(z))\\
&= D_{\sigma ,m - 1} [ - n z^{m - 1} - (n + 1) \left( z^m + z^{m + 1} + \cdots + z^{m + n - 1} \right) ] \\
&= \sum_{j = m}^{n + m - 1} (n + 1)^2 \binom{j}{m - 1} + n^2 \binom{m - 1}{m - 1} \\
&= \sum_{j = m-1}^{n + m - 1} (n + 1)^2 \binom{j}{m - 1} - 2n - 1 \\
&= (n+1)^2 \binom{m+n}{m} - 2n - 1,
\end{align*}
and
\begin{align*}
D_{1,m}(f_n)&=D_{\sigma ,m - 1} ({g_n}(z))= D_{\sigma ,m - 1} \left( n z^{n + m} - z^m - \cdots - z^{n + m - 1} \right) \\
&= \sum_{j = m}^{m + n - 1} \binom{j}{m - 1} + n^2 \binom{m + n}{m - 1} \\
&= n^2 \binom{m + n}{m - 1} + \binom{m + n}{m} - 1.
\end{align*}
A straightforward computation yields that
\begin{align*}
\left\| S_{m,n} \right\|_{\widehat{\mathcal{H}}_{1,m} \to \widehat{\mathcal{H}}_{1,m}}
&\ge \frac{\left\| S_{m,n} f_n \right\|_{1,m}}{\left\| f_n \right\|_{1,m}}
= \frac{\sqrt{D_{\sigma,m-1}(v_n)}}{\sqrt{D_{\sigma,m-1}(g_n)}} \\
&= \sqrt{\frac{(n+1)^2 \binom{m+n}{m} - 2n - 1}{n^2 \binom{m+n}{m-1} + \binom{m+n}{m} - 1}}
\rightarrow \infty \quad \text{as } n \to \infty.
\end{align*}
By the Uniform Boundedness Principle, there exists a function
\( f \in \widehat{\mathcal{H}}_{\mu,m} \)
such that
\[
\sup_n \| S_{m,n} f \|_{\mu,m} = \infty.
\]
In particular,
\[
\| S_{m,n}f - f \|_{\mu,m} \not\to 0,
\]
which completes the proof.
\end{proof}

The above counterexample demonstrates that, even though Taylor polynomials remain admissible elements of weighted Dirichlet spaces, their partial sums may fail to approximate functions in norm.
This instability motivates the introduction of modified summability methods, which will be developed in the next section.

\section{Convergence via Weighted Polynomial Modifications}

The counterexamples in Section~2 show that usual Taylor partial sums fail to approximate functions in
$\widehat{\mathcal{H}}_{\mu,m}$.
In this section, we demonstrate that this instability can be overcome by introducing suitable modifications to the Taylor coefficients.
More precisely, we construct a class of \emph{weighted modified polynomials} whose coefficients are adjusted by a carefully controlled array of weights.
These modifications preserve the algebraic simplicity of polynomial truncations while restoring norm convergence in higher-order weighted Dirichlet spaces.

Our main result asserts that for any $f \in \widehat{\mathcal{H}}_{\mu,m+1}$, there exists a sequence of modified Taylor polynomials $\{p_n\}$ such that
\[
\|f - p_n\|_{\mu,m} \longrightarrow 0 \quad \text{as } n \to \infty.
\]
The proof relies on the local Douglas formula, a refined coefficient correspondence, and a delicate balance between coefficient decay and oscillation control.

To implement this approximation scheme, we introduce an array of complex weights
$\bigl(w_{n,k}\bigr)_{n\ge0,\;k\ge m+1}$,
which governs the modification of Taylor coefficients.

\begin{definition}\label{def4.1}
Let \( (w_{n,k})_{n \ge 0,\, k \ge m+1} \) be an array of complex numbers satisfying:
\begin{align}
&(1)\; w_{n,k}=0, && k>m+n+1; \tag{3.1}\\
&(2)\; |w_{n,k}|\le M, && m+1\le k\le m+n+1; \tag{3.2}\\
&(3)\; \lim_{n\to\infty} w_{n,k}=1, && \text{for each fixed } k\ge m+1; \tag{3.3}\\
&(4)\; |w_{n,k+1}-w_{n,k}|
\le \frac{L}{\sqrt{(n+1)\binom{m+n+1}{m}}}, && m+1\le k<m+n+1. \tag{3.4}
\end{align}
\end{definition}

The array $(w_{n,k})$ can be viewed as a bounded lower-triangular matrix

\[
\begin{pmatrix}
w_{0,m+1} & 0 & 0 & 0 & \cdots & \cdots\\
w_{1,m+1} & w_{1,m+2} & 0 & 0 & \cdots & \cdots\\
w_{2,m+1} & w_{2,m+2} & w_{2,m+3} & 0 & \cdots & \cdots\\
\vdots & \vdots & \vdots & \ddots & \vdots & \cdots\\
w_{k,m+1} & w_{k,m+2} & w_{k,m+3} & \cdots & w_{k,m+k+1} & \cdots\\
\cdots & \cdots & \cdots & \cdots & \cdots &\cdots
\end{pmatrix}
.\]

Condition~(3.3) ensures asymptotic consistency with classical Taylor truncations,
while condition~(3.4) controls oscillations between adjacent coefficients.
Together, these assumptions guarantee stability of the modified polynomials in weighted Dirichlet norms.

\begin{lemma}\label{lm4.2}
Let
\[
f(z)=\sum_{k=m+1}^{\infty} a_k z^k \in \widehat{\mathcal{H}}_{\lambda,m+1}
\]
with representation $f(z)=a+(z-\lambda)g(z)$,
where $a\in\mathbb{C}$ and $g(z)=\sum_{k=0}^{\infty} b_k z^k \in \mathcal{H}_{\sigma,m}$.
Then the coefficients satisfy
\[
\begin{cases}
a=\lambda b_0,\\
b_{k-1}=\lambda b_k, & 1\le k\le m,\\
b_{k-1}-\lambda b_k=a_k, & k\ge m+1.
\end{cases}
\]
\end{lemma}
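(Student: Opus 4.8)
The plan is to reduce the claim to a routine identification of Taylor coefficients, the decomposition itself being furnished by the higher-order local Douglas formula.

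\emph{Step 1: invoke Lemma \ref{lm2.1}.} Applying Lemma \ref{lm2.1} with the order $m+1$ in place of $m$, and using $\widehat{\mathcal{H}}_{\lambda,m+1}\subseteq\mathcal{H}_{\lambda,m+1}$, I obtain $a\in\mathbb{C}$ and $g\in\mathcal{H}_{\sigma,m}$ with $f(z)=a+(z-\lambda)g(z)$ on $\mathbb{D}$; this is exactly the representation appearing in the statement. Since $g\in H(\mathbb{D})$, it has a Taylor expansion $g(z)=\sum_{k\ge 0}b_kz^k$ on $\mathbb{D}$.

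\emph{Step 2: expand and match coefficients.} On each disk $\{|z|<r\}$ with $r<1$, the series for $g$ converges absolutely and uniformly, so I may rearrange to get
\[
a+(z-\lambda)g(z)=(a-\lambda b_0)+\sum_{k\ge 1}\bigl(b_{k-1}-\lambda b_k\bigr)z^k .
\]
By hypothesis $f(z)=\sum_{k\ge m+1}a_kz^k$, so all Taylor coefficients of $f$ of orders $0,1,\dots,m$ vanish. Equating the coefficient of each power $z^k$ in the two expressions for $f$ (legitimate by uniqueness of power series expansions), I read off: from $z^0$, $a=\lambda b_0$; from $z^k$ with $1\le k\le m$, $b_{k-1}-\lambda b_k=0$, that is $b_{k-1}=\lambda b_k$; and from $z^k$ with $k\ge m+1$, $b_{k-1}-\lambda b_k=a_k$. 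This is precisely the asserted system; it is the $\lambda$-twisted analogue of the coefficient relations appearing in the proof of Lemma \ref{lm3.1}.

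\emph{Main obstacle.} There is no substantial obstacle here. The only nontrivial input is Lemma \ref{lm2.1}, which simultaneously supplies the existence of the decomposition and the fact that $g$ lies in $\mathcal{H}_{\sigma,m}$; everything else is the elementary bookkeeping above, whose sole delicate point is that the termwise identification of coefficients is valid, and that follows from the local uniform convergence of the Taylor series of $g$ inside $\mathbb{D}$.
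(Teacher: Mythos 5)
Your proposal is correct and follows essentially the same route as the paper: expand $a+(z-\lambda)g(z)$ into a single power series and identify coefficients with those of $f(z)=\sum_{k\ge m+1}a_kz^k$ by uniqueness of Taylor expansions. The only difference is cosmetic — you additionally record the (already hypothesized) existence of the decomposition via Lemma \ref{lm2.1} and spell out why termwise identification is legitimate, which the paper leaves implicit.
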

\begin{proof}
The result follows from a direct comparison of coefficients after expanding
$f(z)=a+(z-\lambda)g(z)$.
\end{proof}

\begin{definition}\label{def4.3}
Let
\[
f(z)=\sum_{k=m+1}^{\infty} a_k z^k \in \widehat{\mathcal{H}}_{\lambda,m+1}.
\]
The \emph{modified Taylor polynomial} of order $n$ is defined by
\[
p_n(z)=\sum_{k=m+1}^{m+n+1} w_{n,k}\, a_k z^k,
\]
where $(w_{n,k})$ satisfies Definition~\ref{def4.1}.
\end{definition}

Unlike the usual partial sums, the sequence $\{p_n\}$ incorporates coefficient smoothing
and will be shown to converge in weighted Dirichlet norms. The following inclusion property allows us to reduce the approximation problem
to a lower-order weighted Dirichlet space.

\begin{lemma}\label{lm4.4}
If $\lambda \in \mathbb{T}$ and $m \in {N^ + }$, then $ \widehat{\mathcal{H}}_{\lambda, m+1 } \subseteq \widehat{\mathcal{H}}_{\lambda, m }$.
\end{lemma}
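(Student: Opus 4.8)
The plan is to strip off the factor $(z-\lambda)$ by applying the local Douglas formula (Lemma \ref{lm2.1}) in both directions, which reduces the assertion to the elementary inclusion $\mathcal{H}_{\sigma,m}\subseteq\mathcal{H}_{\sigma,m-1}$ among Lebesgue-weighted spaces. Concretely, fix $f\in\widehat{\mathcal{H}}_{\lambda,m+1}$, say $f(z)=\sum_{k=m+1}^{\infty}a_kz^k$. Because the Taylor coefficients of $f$ at the origin vanish for every index $j<m$, the function $f$ is already of the form demanded for membership in $\widehat{\mathcal{H}}_{\lambda,m}$ (take the $z^m$-coefficient to be $0$); hence everything comes down to showing $D_{\lambda,m}(f)<\infty$.

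First I would invoke Lemma \ref{lm2.1} at order $m+1$: since $f\in H(\mathbb{D})$ with $D_{\lambda,m+1}(f)<\infty$, we have $f\in\mathcal{H}_{\lambda,m+1}$, so $f(z)=\alpha+(z-\lambda)g(z)$ for some $\alpha\in\mathbb{C}$ and $g\in\mathcal{H}_{\sigma,m}$, with $D_{\sigma,m}(g)=D_{\lambda,m+1}(f)<\infty$. Next I would show $g\in\mathcal{H}_{\sigma,m-1}$ as well. Writing $g(z)=\sum_{k=0}^{\infty}b_kz^k$ and using the coefficient identity $D_{\sigma,\ell}(g)=\sum_{k\ge\ell}\binom{k}{\ell}|b_k|^2$ together with the elementary bound $\binom{k}{m-1}=\frac{m}{k-m+1}\binom{k}{m}\le m\binom{k}{m}$ valid for all $k\ge m$, one gets
\[
D_{\sigma,m-1}(g)=|b_{m-1}|^2+\sum_{k\ge m}\binom{k}{m-1}|b_k|^2\le |b_{m-1}|^2+m\,D_{\sigma,m}(g)<\infty ,
\]
the finiteness of $|b_{m-1}|^2$ being automatic since $g$ is holomorphic on $\mathbb{D}$. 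Here the hypothesis $m\in\mathbb{N}^+$ guarantees $m-1\ge 0$, so $\mathcal{H}_{\sigma,m-1}$ is well defined, reducing to $H^2$ when $m=1$.

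Finally, with $g\in\mathcal{H}_{\sigma,m-1}$ in hand, the reverse implication of Lemma \ref{lm2.1} applied at order $m$ to the representation $f(z)=\alpha+(z-\lambda)g(z)$ yields $f\in\mathcal{H}_{\lambda,m}$ and $D_{\lambda,m}(f)=D_{\sigma,m-1}(g)<\infty$; combined with the remark above on the vanishing low-order coefficients of $f$, this gives $f\in\widehat{\mathcal{H}}_{\lambda,m}$, as desired. I do not expect a serious obstacle in this argument; the only point deserving attention is the stray term $|b_{m-1}|^2$ in $D_{\sigma,m-1}(g)$, which has no analogue in $D_{\sigma,m}(g)$. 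It is harmless for the finiteness conclusion, but it does mean that the embedding is a bare set inclusion rather than a norm estimate — which is exactly what the lemma claims.
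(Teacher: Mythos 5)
Your argument is correct and is essentially the paper's own proof: both decompose $f=\alpha+(z-\lambda)g$ via Lemma \ref{lm2.1}, use the identity $\binom{k}{m-1}=\frac{m}{k-m+1}\binom{k}{m}$ to pass from $g\in\mathcal{H}_{\sigma,m}$ to $g\in\mathcal{H}_{\sigma,m-1}$, and then apply Lemma \ref{lm2.1} in the reverse direction. Your added remarks on the stray $|b_{m-1}|^2$ term and the vanishing low-order coefficients are sound and, if anything, slightly more careful than the paper's write-up.
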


\begin{proof}
If
\begin{equation*}
f(z) = \sum\limits_{k = m+1}^\infty  a_k z^k \in \widehat{\mathcal{H}}_{\lambda ,m+1},
\end{equation*}
then
\begin{equation*}
\sum\limits_{k = m+1}^\infty a_k z^k = a + (z - \lambda) \sum\limits_{k = 0}^\infty b_k z^k.
\end{equation*}
and
\begin{equation*}
\sum\limits_{k = 0}^\infty b_k z^k \in \mathcal{H}_{\sigma ,m}.
\end{equation*}
That is, $\sum\limits_{k = m}^{\infty} \left| b_k \right|^2 \binom{k}{m} < \infty $, which implies that
\begin{align*}
\sum_{k = m - 1}^{\infty} \left| b_k \right|^2 \binom{k}{m - 1}&= \left| b_{m - 1} \right|^2 + \sum_{k = m}^{\infty} \left| b_k \right|^2 \binom{k}{m - 1} \\
&= \left| b_{m - 1} \right|^2 + m\sum_{k = m}^{\infty} \frac{{{{\left| {{b_k}} \right|}^2}}}{{k - m + 1}} \binom{k}{m} < \infty.
\end{align*}
This means $\sum\limits_{k = 0}^\infty b_k z^k \in \mathcal{H}_{\sigma ,m-1}$, and $f(z) = \sum\limits_{k = m+1}^\infty  a_k z^k \in \widehat{\mathcal{H}}_{\lambda ,m}.$ The proof is completed.
\end{proof}

As a first step, we establish norm convergence for the modified polynomials at a fixed boundary point $\lambda\in\mathbb{T}$.

\begin{corollary}\label{co4.5}
Given $f(z) = \sum\limits_{k = m + 1}^\infty  {{a_k}{z^k}} \in \widehat{\mathcal{H}}_{\lambda, m+1}$ with the representation $f(z) = a + (z - \lambda) g(z), \text{~where~}  a\in\mathbb{C}$ and $g(z) = \sum\limits_{k=0}^{\infty} b_k z^k \in \mathcal{H}_{\sigma, m}$. Define
\begin{equation}
 {g_n}(z): = \sum\limits_{k = m}^{n + m} {{w_{n,k + 1}}} {b_k}{z^k} \tag{3.5}
 \end{equation}
 and set $ f_n(z) = (z - \lambda) g_n(z) $, then
$${D_{\lambda ,m}}(f - {f_n}) \to 0 \quad as \quad n \to \infty.$$
\end{corollary}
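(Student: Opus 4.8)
The plan is to use the order-$m$ local Douglas formula (Lemma \ref{lm2.1}) to rewrite $D_{\lambda,m}(f-f_n)$ as an ordinary weighted $\ell^2$-sum over the Taylor coefficients of $g$, and then to force that sum to $0$ by splitting it into an \emph{oscillation error} — the block of coefficients of degree $m$ through $n+m$, each weighted by $1-w_{n,k+1}$ — and a \emph{truncation tail} — the coefficients of degree greater than $n+m$ — controlling the former by properties (3.2) and (3.3) of Definition \ref{def4.1} and the latter by the summability of $g$.

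First I would record the error decomposition. Since $f=a+(z-\lambda)g$ and $f_n=(z-\lambda)g_n$ with $g_n$ a polynomial, we have
\[
f-f_n=a+(z-\lambda)(g-g_n),
\]
and $g-g_n\in\mathcal{H}_{\sigma,m-1}$: indeed $g\in\mathcal{H}_{\sigma,m}$, which by the computation in the proof of Lemma \ref{lm4.4} yields $\sum_{k\ge m-1}\binom{k}{m-1}|b_k|^2<\infty$, i.e., $g\in\mathcal{H}_{\sigma,m-1}$, while $g_n$ is a polynomial. Thus Lemma \ref{lm2.1} applies to the function $f-f_n$ in its $\alpha+(z-\lambda)(\cdot)$ form and gives $D_{\lambda,m}(f-f_n)=D_{\sigma,m-1}(g-g_n)$.

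Next I would expand the right-hand side through the coefficient identity $D_{\sigma,m-1}(h)=\sum_{k\ge m-1}\binom{k}{m-1}|c_k|^2$. From the definition (3.5), the coefficient of $z^k$ in $g-g_n$ is $b_k$ for $0\le k\le m-1$, is $(1-w_{n,k+1})b_k$ for $m\le k\le n+m$, and is $b_k$ for $k\ge n+m+1$, so
\[
D_{\sigma,m-1}(g-g_n)=|b_{m-1}|^2+\sum_{k=m}^{n+m}\binom{k}{m-1}\bigl|1-w_{n,k+1}\bigr|^2|b_k|^2+\sum_{k=n+m+1}^{\infty}\binom{k}{m-1}|b_k|^2 .
\]
The truncation tail is the tail of the convergent series $\sum_k\binom{k}{m-1}|b_k|^2$ and hence tends to $0$ as $n\to\infty$. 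The oscillation error tends to $0$ by the dominated convergence theorem on $\mathbb{N}$ with counting measure: by (3.2) its general term is dominated by the summable, $n$-independent majorant $(1+M)^2\binom{k}{m-1}|b_k|^2$, and by (3.3) it converges to $0$ pointwise in $k$.

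The step I expect to be decisive is controlling the low-degree part. The computation above shows that the term $|b_{m-1}|^2$ is independent of $n$, so the asserted convergence $D_{\lambda,m}(f-f_n)\to 0$ forces $g-g_n$ to lose all coefficients of degree $\le m-1$; since by Lemma \ref{lm4.2} the coefficients $b_0,\dots,b_{m-1}$ are rigidly determined by $b_m$ through $b_{k-1}=\lambda b_k$, this is equivalent to having $f_n$ additionally carry the polynomial $a+(z-\lambda)\sum_{k=0}^{m-1}b_kz^k=\lambda b_m z^m$. Handling this low-order bookkeeping correctly, together with invoking the sharp summability from Lemma \ref{lm4.4} to license the dominated convergence step, is the crux; beyond that, only (3.2) and (3.3) are used here, while (3.1) and the oscillation bound (3.4) enter later, when $f_n$ is compared with the modified polynomial $p_n$.
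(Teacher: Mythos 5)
Your reduction $D_{\lambda,m}(f-f_n)=D_{\sigma,m-1}(g-g_n)$ and the treatment of the two convergent pieces (dominated convergence for the block $m\le k\le n+m$ weighted by $1-w_{n,k+1}$, tail summability for $k\ge n+m+1$) are correct and follow the same route as the paper, which runs the identical Douglas-formula reduction and replaces your dominated-convergence step by an explicit $\varepsilon$--$N$ split. The substantive issue is exactly the one you flag at the end, and you are right to flag it: the residual $|b_{m-1}|^2$ does not vanish, and the corollary as stated is false. In the paper's own proof the summand $b_{m-1}z^{m-1}$ is present in the first displayed decomposition of $g-g_n$ and then silently disappears from the triangle-inequality bound on the next line, even though $D_{\sigma,m-1}^{1/2}(b_{m-1}z^{m-1})=|b_{m-1}|$; since by Lemma \ref{lm4.2} one has $b_{m-1}=\lambda b_m=\lambda^{-m}a$ with $a=f^*(\lambda)$, the correct conclusion of the argument is $D_{\lambda,m}(f-f_n)\to|f^*(\lambda)|^2$, which is zero only when $f^*(\lambda)=0$. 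A concrete counterexample: take $m=1$, $\lambda=1$, $f(z)=z^2=1+(z-1)(z+1)$, so that $b_0=b_1=1$ and $b_k=0$ for $k\ge 2$; then $g_n(z)=w_{n,2}z$ and
\[
D_{1,1}(f-f_n)=D_{\sigma,0}\bigl(1+(1-w_{n,2})z\bigr)=1+|1-w_{n,2}|^2\longrightarrow 1=|f^*(1)|^2 .
\]
Your proposed repair is the right one: augmenting $f_n$ by $a+(z-\lambda)\sum_{k=0}^{m-1}b_kz^k=\lambda b_mz^m$, equivalently letting $g_n$ start at degree $m-1$ as the paper itself does in Lemma \ref{lm5.2}, annihilates every coefficient of $g-g_n$ of degree at most $m-1$ and leaves only the two pieces you already showed tend to zero. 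Be aware that the defect propagates: the proof of Lemma \ref{lm4.8} decomposes $f-p_n$ as $(f-f_n)+(f_n-p_n)$ and likewise drops the non-vanishing term $D_{\lambda,m}^{1/2}(w_{n,m+1}b_mz^m)\to|b_m|$ from the second summand, so each half of that triangle inequality carries a contribution of size $|f^*(\lambda)|$; these contributions cancel inside $f-p_n$ itself (whose coefficient of $z^m$ is zero), but the proof as written does not exploit that cancellation.
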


\begin{proof}
Since Lemma \ref{lm4.4} gives $f(z) = \sum\limits_{k = m+1}^{\infty}  a_k z^k \in \widehat{\mathcal{H}}_{\lambda ,m+1} \subseteq \widehat{\mathcal{H}}_{\lambda ,m}$,  we obtain $g \in \mathcal{H}_{\sigma ,m-1}$ and ${g_n} \in \widehat{\mathcal{H}}_{\sigma ,m-1}$ by Lemma \ref{lm2.1}, which implies that ${D_{\lambda ,m}}(f - {f_n}) = {D_{\sigma ,m - 1}}(g - {g_n})$.
For arbitrary $ \varepsilon >0 $, we can choose a sufficiently large $N\in\mathbb{N}$ such that
\begin{equation}
 \sum\limits_{k = N}^\infty  {|{b_k}{|^2}} \binom{k}{m - 1}<{\varepsilon ^2}. \tag{3.6}
 \end{equation}
Then
\begin{align*}
&D_{\sigma ,m - 1}^{1/2}\left( {g - {g_n}} \right) = D_{\sigma ,m - 1}^{1/2}\left( {\sum\limits_{k = 0}^\infty  {{b_k}} {z^k} - \sum\limits_{k = m}^{n + m} {{w_{n,k + 1}}} {b_k}{z^k}} \right)\\
&=D_{\sigma ,m - 1}^{1/2}\left( {{b_{m - 1}}{z^{m - 1}} + \sum\limits_{k = m}^{N - 1} {{b_k}{z^k}}  - \sum\limits_{k = m}^{N - 1} {{w_{n,k + 1}}{b_k}{z^k} + \sum\limits_{k = N}^\infty  {{b_k}{z^k} - \sum\limits_{k = N}^{n + m} {{w_{n,k + 1}}{b_k}{z^k}} } } } \right)\\
& \le D_{\sigma ,m - 1}^{1/2}\left[ {\sum\limits_{k = m}^{N - 1} {(1 - {w_{n,k + 1}}){b_k}{z^k}} } \right] + D_{\sigma ,m - 1}^{1/2}\left( {\sum\limits_{k = N}^\infty  {{b_k}{z^k}} } \right) + D_{\sigma ,m - 1}^{1/2}\left( {\sum\limits_{k = N}^{n + m} {{w_{n,k + 1}}{b_k}{z^k}} }, \right)
\end{align*}
where by \((3.3)\), the first term tends to 0 as $n \to \infty$ and by \((3.6)\), the second term and the third term tend to 0 as $n \to \infty$.

Hence, ${D_{\lambda ,m}}(f - {f_n})={D_{\sigma ,m-1}}(g - {g_n}) \to 0 $ as $n \to \infty$.
\end{proof}

\begin{corollary}\label{co4.6}
Let \( f(z) = a + (z - \lambda)g(z) \in \widehat{\mathcal{H}}_{\lambda, m+1} \) with $ a\in\mathbb{C} $ and $g(z) = \sum\limits_{k = 0}^\infty  {{b_k}{z^k}}  \in H_{\sigma, m} $. We define ${g_n}(z)$ as in (3.5) and ${f_n}(z) := (z - \lambda ){g_n}(z)$. Then there exists a positive constant ${C_1}$ such that
$${D_{\lambda ,m}}(f - {f_n}) \le C_1^2{D_{\lambda ,m+1}}(f).$$
\end{corollary}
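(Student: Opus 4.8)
The plan is to reduce the estimate to an elementary inequality between two diagonal Hardy--Sobolev-type norms. By Corollary~\ref{co4.5} one already has $D_{\lambda,m}(f-f_n)=D_{\sigma,m-1}(g-g_n)$, and applying Lemma~\ref{lm2.1}(i) at order $m+1$ to the representation $f=a+(z-\lambda)g$ with $g\in\mathcal{H}_{\sigma,m}$ gives $D_{\lambda,m+1}(f)=D_{\sigma,m}(g)$. Hence it suffices to produce a constant $C_1$, depending only on $m$ and the bound $M$ from $(3.2)$, such that $D_{\sigma,m-1}(g-g_n)\le C_1^2\,D_{\sigma,m}(g)$.

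First I would write the difference out explicitly. Since $g(z)=\sum_{k=0}^\infty b_k z^k$ and, by the definition $(3.5)$ of $g_n$, $g_n(z)=\sum_{k=m}^{n+m}w_{n,k+1}b_k z^k$,
$$g(z)-g_n(z)=\sum_{k=0}^{m-1}b_k z^k+\sum_{k=m}^{n+m}(1-w_{n,k+1})b_k z^k+\sum_{k=n+m+1}^{\infty}b_k z^k.$$
Substituting into $D_{\sigma,m-1}\bigl(\sum_k c_k z^k\bigr)=\sum_{k\ge m-1}\binom{k}{m-1}|c_k|^2$, the coefficients of degree $\le m-2$ drop out since their binomial weight is $0$, so from the first block only the single term $\binom{m-1}{m-1}|b_{m-1}|^2=|b_{m-1}|^2$ survives. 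For the middle and tail blocks I would use the ratio identity $\binom{k}{m-1}/\binom{k}{m}=m/(k-m+1)\le m$, valid for all $k\ge m$, together with $|1-w_{n,k+1}|\le 1+M$ from $(3.2)$; this bounds the middle block by $m(1+M)^2\sum_{k=m}^{n+m}\binom{k}{m}|b_k|^2$ and the tail block by $m\sum_{k>n+m}\binom{k}{m}|b_k|^2$, each at most a constant times $D_{\sigma,m}(g)$.

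The one step that is not purely mechanical is the leftover term $|b_{m-1}|^2$: the index $m-1$ does not occur in $D_{\sigma,m}(g)$ at all, so it cannot be absorbed by the tail estimate. Here I would invoke the coefficient correspondence of Lemma~\ref{lm4.2}: because $f\in\widehat{\mathcal{H}}_{\lambda,m+1}$ has no terms of degree $\le m$, one has $b_{k-1}=\lambda b_k$ for $1\le k\le m$, whence $b_{m-1}=\lambda b_m$ and, since $|\lambda|=1$, $|b_{m-1}|^2=|b_m|^2\le\binom{m}{m}|b_m|^2\le D_{\sigma,m}(g)$. Combining the three pieces yields $D_{\sigma,m-1}(g-g_n)\le\bigl(1+m+m(1+M)^2\bigr)D_{\sigma,m}(g)$, so any $C_1$ with $C_1^2\ge 1+m+m(1+M)^2$ works, and this bound is uniform in $n$, which is the content of the corollary. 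I expect the main (mild) obstacle to be precisely recognizing that this surplus low-degree coefficient must be handled through the structural correspondence of Lemma~\ref{lm4.2} rather than through the weighted norm directly; the remainder is the same splitting already performed in Corollary~\ref{co4.5}, now with the uniform bound $M$ in place of the limit relation $(3.3)$.
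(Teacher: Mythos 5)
Your proof is correct and follows essentially the same route as the paper: reduce via Lemma~\ref{lm2.1} to $D_{\sigma,m-1}(g-g_n)\le C_1^2 D_{\sigma,m}(g)$, then compare the two diagonal norms using the ratio $\binom{k}{m-1}/\binom{k}{m}=m/(k-m+1)\le m$ together with the uniform bound $M$ on the weights. The only difference is that you explicitly isolate and absorb the surplus term $|b_{m-1}|^2$ via the correspondence $b_{m-1}=\lambda b_m$ of Lemma~\ref{lm4.2}, a step the paper's displayed chain of inequalities leaves implicit; your version is the more complete one.
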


\begin{proof}
\begin{align*}
&{D_{\lambda ,m}}\left( {f - {f_n}} \right) = {D_{\sigma ,m - 1}}\left( {g - {g_n}} \right)\\
& \le 2{D_{\sigma ,m - 1}}\left( g \right) + 2{D_{\sigma ,m - 1}}\left( {{g_n}} \right)\\
&= 2\sum\limits_{k = m - 1}^\infty  {{{\left| {{b_k}} \right|}^2}}\binom{k}{m-1} + 2\sum\limits_{k = m}^{n + m} {{{\left| {{w_{n,k + 1}}} \right|}^2}{{\left| {{b_k}} \right|}^2}}\binom{k}{m-1} \\
& \le C_1^2{D_{\sigma ,m}}(g)= C_1^2{D_{\lambda ,m+1}}(f).
\end{align*}
\end{proof}

 \begin{lemma}\label{lm4.7}
Let \( q(z) = \sum\limits_{k = m+1}^{n + m +1} c_k z^k \in \widehat{\mathcal{H}}_{\sigma, m} \). Then
\[
D_{\lambda ,m}(q) \le (n + 1) \binom{n+m+1}{m} D_{\sigma ,m}(q).
\]
\end{lemma}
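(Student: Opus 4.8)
The plan is to transfer the problem to the Taylor-coefficient side by way of the local Douglas formula. Since $q$ is a polynomial it certainly belongs to $\mathcal{H}_{\lambda,m}$, so Lemma \ref{lm2.1} applies and produces the decomposition $q(z) = q(\lambda) + (z-\lambda)h(z)$, where $h(z) := \dfrac{q(z)-q(\lambda)}{z-\lambda} = \sum_{k=0}^{n+m} d_k z^k$ is a polynomial of degree at most $n+m$, together with the identity
\[
D_{\lambda,m}(q) \;=\; D_{\sigma,m-1}(h) \;=\; \sum_{k=m-1}^{n+m}\binom{k}{m-1}\,|d_k|^2 .
\]
So it suffices to bound each $|d_k|$, and this is where the hypothesis on the degree of $q$ enters decisively.

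Expanding $q(z)-q(\lambda) = \sum_{k=m+1}^{n+m+1} c_k\,(z^k-\lambda^k)$ and carrying out the (exact) division by $z-\lambda$ gives
\[
h(z) = \sum_{k=m+1}^{n+m+1} c_k \sum_{j=0}^{k-1}\lambda^{\,k-1-j} z^j ,
\qquad\text{hence}\qquad
d_j = \sum_{k=\max(m+1,\,j+1)}^{n+m+1} c_k\,\lambda^{\,k-1-j}
\]
for $0 \le j \le n+m$ --- equivalently, one solves the coefficient correspondence of Lemma \ref{lm4.2} downward starting from $d_{n+m+1}=0$. The point is that for every $j$ with $m-1 \le j \le n+m$ this is a sum over \emph{at most} $n+1$ indices $k \in \{m+1,\dots,n+m+1\}$, weighted by the unimodular factors $\lambda^{\,k-1-j}$; Cauchy--Schwarz therefore yields the uniform estimate
\[
|d_j|^2 \;\le\; (n+1)\sum_{k=m+1}^{n+m+1}|c_k|^2 \qquad (m-1 \le j \le n+m).
\]

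Substituting this bound into the Douglas identity and pulling the constant out gives
\[
D_{\lambda,m}(q) \;\le\; (n+1)\Big(\sum_{k=m+1}^{n+m+1}|c_k|^2\Big)\sum_{j=m-1}^{n+m}\binom{j}{m-1},
\]
and two elementary facts finish it: the hockey-stick identity $\sum_{j=m-1}^{n+m}\binom{j}{m-1} = \binom{n+m+1}{m}$, and the trivial inequality $\sum_{k=m+1}^{n+m+1}|c_k|^2 \le \sum_{k=m+1}^{n+m+1}\binom{k}{m}|c_k|^2 = D_{\sigma,m}(q)$, valid since $\binom{k}{m}\ge 1$ for $k \ge m$. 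The one step that needs genuine care is the coefficient computation and its index bookkeeping: the essential observation is that the degree constraint $\deg h \le n+m$ forces each $d_j$ to involve no more than $n+1$ of the $c_k$, which is precisely what manufactures the factor $n+1$, while the binomial factor then drops out of the plain sum of $\binom{j}{m-1}$. Beyond that, no real obstacle is expected.
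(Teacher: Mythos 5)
Your proof is correct and follows essentially the same route as the paper's: decompose $q$ via Lemma \ref{lm2.1}, express the coefficients $d_j$ of the quotient as sums of at most $n+1$ of the $c_k$ against unimodular powers of $\lambda$, apply Cauchy--Schwarz, and sum $\binom{j}{m-1}$ via the hockey-stick identity. The only (cosmetic) difference is that you keep $\sum_k |c_k|^2$ separate until the final comparison with $D_{\sigma,m}(q)$ and treat the edge indices $j=m-1$ and $j=n+m$ uniformly, whereas the paper folds the binomial weights in earlier and handles those two indices as separate cases.
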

\begin{proof}
Writing \( q(z) \) in terms of its decomposition around the point \( \lambda \in \mathbb{T} \) as in Lemma \ref{lm2.1}, we see that
\begin{align*}
\sum_{k = m + 1}^{n + m + 1} c_k z^k
&= q(\lambda) + (z - \lambda) \sum_{k = 0}^{n + m} d_k z^k \\
&= \left[ {q(\lambda ) - \lambda {d_0}} \right] + d_{n+m} z^{n+m+1} + \sum_{k = 1}^{n+m} (d_{k-1} - \lambda d_k) z^k.
\end{align*}

By comparing coefficients, we obtain
\[
\left\{
\begin{array}{ll}
q(\lambda ) = \lambda {d_0},\\
{d_{k - 1}} = \lambda {d_k}, & \text{for } 1 \le k \le m, \\
c_k = d_{k-1} - \lambda d_k, & \text{for } m+1 \le k \le n + m, \\
c_{n+m+1} = d_{n+m}. &
\end{array}
\right.
\]

Now, we estimate the coefficients $d_{k}$ in three cases.

For \( m \le k \le n + m - 1 \), we can express each \( d_k \) in terms of the \( c_j \)
\[
d_k = \sum_{j = k + 1}^{n + m + 1} \lambda^{j - (k + 1)} c_j.
\]
This implies
\[
|d_k|^2 \le \left( \sum_{j = k + 1}^{n + m + 1} |\lambda|^{2(j - k - 1)} \right)
\left( \sum_{j = k + 1}^{n + m + 1} |c_j|^2 \right)
= (n + m +1 - k) \sum_{j = k + 1}^{n + m + 1} |c_j|^2
\]
by using Cauchy-Schwarz inequality.

For \( m \le k \le n + m - 1 \), we have
\begin{align*}
|d_k|^2
&\le (n + 1) \sum_{j = m + 1}^{n + m + 1} |c_j|^2 \binom{j}{m}
= (n + 1) D_{\sigma,m}(q).
\end{align*}
For $ k=m-1 $ or  $ k=n+m $, similar estimation can be obtained that

\[
|d_{m-1}|^2 = |d_{m}|^2 \le (n + 1) D_{\sigma, m}(q)
\]
and
\[
|d_{n+m}|^2 = |c_{n+m+1}|^2 \le (n + 1) D_{\sigma, m}(q).
\]

By using Lemma \ref{lm2.1} again, we get
\begin{align*}
D_{\lambda, m}(q)
&= D_{\sigma, m - 1} \left( \sum_{k = 0}^{n + m} d_k z^k \right)
= \sum_{k = m - 1}^{n + m} |d_k|^2 \binom{k}{m - 1} \\
&\le \sum_{k = m - 1}^{n + m} (n + 1) D_{\sigma, m}(q) \binom{k}{m - 1}
= (n + 1) \binom{n + m + 1}{m} D_{\sigma, m}(q).
\end{align*}
This completes the proof.
\end{proof}

We now extend the pointwise convergence results to the setting of finite positive
Borel measures on $\mathbb{T}$.
The next lemma establishes almost-everywhere convergence of the modified polynomials,
together with a uniform domination estimate, which is essential for applying the
dominated convergence theorem.

\begin{lemma}\label{lm4.8}
Let \( \mu \) be a finite positive Borel measure on \( \mathbb{T} \), \(  f(z)=\sum\limits_{k = m+1}^\infty  {{a_k}{z^k}} \in \widehat{\mathcal{H}}_{\mu, m+1} \) and \( p_n(z) = \sum\limits_{k = m+1}^{n + m + 1} w_{n,k} a_k z^k \). Then
\[
\int_{\mathbb{T}} D_{\lambda,m+1}(f) \, d\mu(\lambda) < \infty,
\]
and in particular, \( D_{\lambda,m+1}(f) < \infty \) for \( \mu \)-almost every \(\lambda \in \mathbb{T} \). Furthermore, for each such \( \lambda \), we have
\[
\lim_{n \to \infty} D_{\lambda,m}(f - p_n) = 0 \quad \text{and} \quad
D_{\lambda,m}(f - p_n) \le C^2 D_{\lambda,m+1}(f),
\]
where \( C \) is a positive constant depending only on the array \( (w_{n,k})_{n \geq 0, k \geq m+1} \).
\end{lemma}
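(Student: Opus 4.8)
The plan is to treat the three assertions in order, because each feeds the next. The integrability statement $\int_{\mathbb{T}} D_{\lambda,m+1}(f)\,d\mu(\lambda)<\infty$ should follow by expanding $D_{\lambda,m+1}(f)$ against $d\mu(\lambda)$ and using Fubini–Tonelli (everything is nonnegative, so this is unconditional). Concretely, writing $D_{\lambda,m+1}(f)=D_{\delta_\lambda,m+1}(f)$ and using the definition of $D_{\mu,m+1}$ as an integral over $\mathbb{D}$ against $P_\mu$, one has $P_\mu(z)=\int_{\mathbb{T}}P_{\delta_\lambda}(z)\,d\mu(\lambda)$, so swapping the order of integration yields exactly $\int_{\mathbb{T}}D_{\lambda,m+1}(f)\,d\mu(\lambda)=D_{\mu,m+1}(f)<\infty$, the last step being the hypothesis $f\in\widehat{\mathcal{H}}_{\mu,m+1}$. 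Finiteness of the integral of a nonnegative function immediately gives $D_{\lambda,m+1}(f)<\infty$ for $\mu$-a.e.\ $\lambda$.

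Next, for each such $\lambda$ (so that $f\in\widehat{\mathcal{H}}_{\lambda,m+1}$), I want to connect the polynomial $p_n$ from Definition \ref{def4.3} to the auxiliary function $f_n=(z-\lambda)g_n$ of Corollary \ref{co4.5}. The key identity to verify is that $p_n$ and $f_n$ differ only by a constant and possibly one boundary correction term; more precisely, using the coefficient correspondence of Lemma \ref{lm4.2} ($b_{k-1}-\lambda b_k=a_k$ for $k\ge m+1$) and the definition $g_n(z)=\sum_{k=m}^{n+m}w_{n,k+1}b_k z^k$, a direct telescoping computation of $(z-\lambda)g_n(z)$ should show that $f_n(z)=(\text{const})+\sum_{k=m+1}^{n+m+1}w_{n,k}a_k z^k + (\text{error terms from the endpoints } k=m \text{ and } k=n+m+1)$. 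Since $D_{\lambda,m}$ annihilates constants, $D_{\lambda,m}(f-p_n)$ and $D_{\lambda,m}(f-f_n)$ agree up to the contribution of these low-order/endpoint terms; the $z^m$-type terms are killed by $D_{\lambda,m}$ (or more precisely by passing through the $D_{\sigma,m-1}$ picture as in Lemma \ref{lm2.1}), and the endpoint term $w_{n,n+m+1}b_{n+m}z^{n+m+1}$ must be shown to go to zero, using the tail bound (3.6) together with the uniform bound (3.2) on $w_{n,k}$. Once the reduction $D_{\lambda,m}(f-p_n)\to D_{\lambda,m}(f-f_n)$ (up to vanishing terms) is in place, Corollary \ref{co4.5} delivers $D_{\lambda,m}(f-f_n)\to 0$, hence $\lim_{n\to\infty}D_{\lambda,m}(f-p_n)=0$.

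For the uniform bound $D_{\lambda,m}(f-p_n)\le C^2 D_{\lambda,m+1}(f)$, the plan is to split $f-p_n = (f-f_n) + (f_n-p_n)$ and bound each piece. Corollary \ref{co4.6} gives $D_{\lambda,m}(f-f_n)\le C_1^2 D_{\lambda,m+1}(f)$ directly. For $f_n-p_n$, by the reduction above this is (up to constants) a polynomial supported on frequencies between $m+1$ and $n+m+1$ consisting of the endpoint/low-order corrections, so Lemma \ref{lm4.7} applies: $D_{\lambda,m}(f_n-p_n)\le (n+1)\binom{n+m+1}{m}D_{\sigma,m}(f_n-p_n)$, and one then estimates $D_{\sigma,m}$ of these correction terms using condition (3.4) — the oscillation bound $|w_{n,k+1}-w_{n,k}|\le L/\sqrt{(n+1)\binom{m+n+1}{m}}$ is precisely calibrated so that the factor $(n+1)\binom{n+m+1}{m}$ is absorbed, leaving a bound of the form $L^2\,(\text{something})\le C_2^2 D_{\lambda,m+1}(f)$. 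Taking $C^2 = 2C_1^2 + 2C_2^2$ (or similar) via the triangle inequality $D_{\lambda,m}(f-p_n)\le 2D_{\lambda,m}(f-f_n)+2D_{\lambda,m}(f_n-p_n)$ finishes it, with $C$ depending only on the constants $M,L$ of the array.

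The main obstacle I anticipate is the bookkeeping in the identity relating $p_n$ to $f_n$: getting the telescoping exactly right, correctly identifying which endpoint terms appear, and then showing that the difference $f_n-p_n$, after passing to the $D_{\sigma,m-1}$ representation of Lemma \ref{lm2.1}, is genuinely controlled by condition (3.4) rather than merely by the uniform bound (3.2) — it is the interplay between the combinatorial weight $(n+1)\binom{n+m+1}{m}$ in Lemma \ref{lm4.7} and the matching denominator in (3.4) that makes the constant $C$ independent of $n$, and verifying that cancellation cleanly is the delicate point.
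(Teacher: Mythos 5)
Your plan follows the paper's proof almost step for step: Tonelli applied to $P_\mu(z)=\int_{\mathbb{T}}P_{\delta_\lambda}(z)\,d\mu(\lambda)$ gives $\int_{\mathbb{T}}D_{\lambda,m+1}(f)\,d\mu(\lambda)=D_{\mu,m+1}(f)<\infty$; the splitting $f-p_n=(f-f_n)+(f_n-p_n)$ with $f_n=(z-\lambda)g_n$ is exactly the paper's; Corollaries \ref{co4.5} and \ref{co4.6} handle $f-f_n$; and Lemma \ref{lm4.7} combined with condition (3.4) absorbs the factor $(n+1)\binom{n+m+1}{m}$ for $f_n-p_n$, which is precisely the calibration you identify as the crux. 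Two points, however, do not survive the bookkeeping you rightly flag as the delicate part. First, the telescoping gives
\[
f_n(z)-p_n(z)=\lambda\sum_{k=m+1}^{n+m+1}(w_{n,k}-w_{n,k+1})\,b_k z^k-\lambda\, w_{n,m+1}b_m z^m,
\]
so the discrepancy is the \emph{full} sum of adjacent differences, not merely a constant plus two endpoint corrections; your later appeal to (3.4) shows you expect this, but the identity as you state it is wrong. Second, and more seriously, the claim that ``the $z^m$-type terms are killed by $D_{\lambda,m}$'' is false: passing through Lemma \ref{lm2.1} one finds $D_{\lambda,m}(cz^m)=\binom{m-1}{m-1}|c|^2=|c|^2$, and $w_{n,m+1}b_m\to b_m=\bar{\lambda}^{\,m+1}f^*(\lambda)$, which is nonzero in general. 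Hence $D_{\lambda,m}(f_n-p_n)\not\to0$ on its own (take $m=1$, $\lambda=1$, $f=z^2$: then $f_n-p_n=-w_{n,2}z$ and $D_{1,1}(f_n-p_n)\to1$), and symmetrically $D_{\lambda,m}(f-f_n)=D_{\sigma,m-1}(g-g_n)\ge|b_{m-1}|^2$, because $g_n$ as defined in (3.5) starts at $k=m$ and so misses the $b_{m-1}z^{m-1}$ coefficient that $D_{\sigma,m-1}$ sees. The two non-vanishing defects cancel only in the sum $f-p_n$, so an argument that estimates the two pieces separately cannot close as written; one must either track that cancellation explicitly or modify $g_n$ to include a $w_{n,m}b_{m-1}z^{m-1}$ term so that both defects are damped by $1-w_{n,m}\to0$. (To be fair, the paper's own write-up of Corollary \ref{co4.5} and of this lemma silently drops exactly these terms, so your proposal reproduces the published argument, gap included.)
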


\begin{proof}
For \(  f(z)=\sum\limits_{k = m+1}^\infty  {{a_k}{z^k}} \in \widehat{\mathcal{H}}_{\mu, m+1} \), there exists some \( \alpha \in \mathbb{C}\) and \( g(z) = \sum\limits_{k = 0}^\infty  {{b_k}{z^k}} \in {\mathcal{H}_{\sigma ,m}} \) such that \( f(z) = \alpha + (z - \lambda)g(z)\). Let \( {g_n}(z) = \sum\limits_{k = m}^{n + m} {{w_{n,k + 1}}{b_k}{z^k}}\) and \({f_n}(z) = (z - \lambda ){g_n}(z)\), where \( (w_{n,k})_{n \geq 0, k \geq m+1} \) is an array defined in Definition \ref{def4.1}. For arbitrary \( \varepsilon > 0 \) , choose a sufficiently large \( N \in \mathbb{N} \) such that
\[
\sum_{k = m + N +1}^{\infty} |b_k|^2 \binom{k}{m} < \varepsilon^2. \tag{3.7}
\]
By the triangle inequality, we have
\begin{align*}
D_{\lambda ,m}^{1/2}(p_n - f)
&= D_{\lambda ,m}^{1/2}\left[(p_n - f_n) + (f_n - f)\right] \\
&\le D_{\lambda ,m}^{1/2}(p_n - f_n) + D_{\lambda ,m}^{1/2}(f_n - f).
\end{align*}
A direct computation yields that
\begin{align*}
f_n(z)
&= (z - \lambda) g_n(z) = (z - \lambda) \sum_{k = m}^{n + m} w_{n,k + 1} b_k z^k \\
&= \sum_{k = m+1}^{n + m + 1} w_{n,k} b_{k - 1} z^k
- \lambda \sum_{k = m}^{n + m} w_{n,k + 1} b_k z^k \\
&= \sum_{k = m+1}^{n + m + 1} w_{n,k} (b_{k - 1} - \lambda b_k) z^k
+ \lambda \sum_{k = m+1}^{n + m + 1} (w_{n,k} - w_{n,k + 1}) b_k z^k - \lambda {w_{n,m+1}}{b_{m}}{z^{m}}.
\end{align*}
By the relationship give in Lemma \ref{lm4.2}, \( a_k \) = \( b_{k-1} - \lambda b_k \), which implies that
\[
f_n(z) = \sum_{k = m + 1}^{n + m + 1} w_{n,k} a_k z^k
+ \lambda \sum_{k = m + 1}^{n + m + 1} (w_{n,k} - w_{n,k + 1}) b_k z^k - \lambda {w_{n,m+1}}{b_{m}}{z^{m}}.
\]
Therefore,
\[
f_n(z) - p_n(z) = \lambda \sum_{k = m + 1}^{n + m + 1} (w_{n,k} - w_{n,k + 1}) b_k z^k - \lambda {w_{n,m+1}}{b_{m}}{z^{m}},
\]
and then
\begin{align*}
&D_{\lambda ,m}^{1/2}(f_n - p_n)
\le D_{\lambda ,m}^{1/2} \left[ \sum_{k = m + 1}^{n + m + 1} (w_{n,k} - w_{n,k + 1}) b_k z^k \right] + D_{\lambda ,m}^{1/2}({w_{n,m+1}}{b_{m}}{z^{m}})\\
&\le D_{\lambda ,m}^{1/2} \left[ \sum_{k = m + 1}^{m + N} (w_{n,k} - w_{n,k + 1}) b_k z^k \right]
+ D_{\lambda ,m}^{1/2} \left[ \sum_{k = m + N + 1}^{n + m} (w_{n,k} - w_{n,k + 1}) b_k z^k \right]\\
&\quad+ D_{\lambda ,m}^{1/2} \left[(w_{n,n + m + 1} - w_{n,n + m + 2}) b_{n + m + 1} z^{ n + m + 1} \right]\\
&\le \left[
N \binom{N + m}{m} \sum_{k = m + 1}^{m + N}
|w_{n,k + 1} - w_{n,k}|^2 |b_k|^2 \binom{k}{m}
\right]^{1/2} \\
&\quad +
\left[
(n + 1) \binom{n + m + 1}{m} \sum_{k = m + N + 1}^{n + m}
|w_{n,k} - w_{n,k + 1}|^2 |b_k|^2 \binom{k}{m}
\right]^{1/2}\\
&\quad + D_{\lambda ,m}^{1/2}\left( {{w_{n,n + m + 1}}{b_{n + m + 1}}{z^{n + m + 1}}} \right).
\end{align*}
By \((3.3)\), the first term tends to 0 as \( n \to \infty \). Moreover, it follows from \((3.4)\) and \((3.7)\) that the second term tends to 0 as \( n \to \infty \). It remains to estimate the third term. To this end, we first note that

\[
{w_{n,n + m + 1}}{b_{n + m + 1}}{z^{n + m + 1}}
= {\lambda ^{n + m + 1}}{w_{n,n + m + 1}}{b_{n + m + 1}} + (z - \lambda )\sum\limits_{k = 0}^{n + m} {{\lambda ^{(n + m) - k}}{w_{n,n + m + 1}}{b_{n + m + 1}}{z^k}}.
\]
This together with Lemma \ref{lm2.1}, yields
\begin{align*}
&D_{\lambda ,m}^{1/2}\left( {{w_{n,n + m + 1}}{b_{n + m + 1}}{z^{n + m + 1}}} \right) \\
&= D_{\sigma,m-1}^{1/2} \left( \sum_{k=0}^{n+m} \lambda^{(n+m)-k} w_{n,n+m+1} b_{n+m+1} z^k \right) \\
&= \left[ \sum_{k=m-1}^{n+m} |w_{n,n+m+1}|^2 |b_{n+m+1}|^2 \binom{k}{m-1} \right]^{1/2} \\
&= \left[ |w_{n,n+m+1}|^2 |b_{n+m+1}|^2 \binom{n+m+1}{m} \right]^{1/2}\\
& \leq M  \left[ \left| b_{n+m+1} \right|^2 \binom{n+m+1}{m} \right]^{1/2}.
\end{align*}
By \((3.7)\), $M  \left[ \left| b_{n+m+1} \right|^2 \binom{n+m+1}{m} \right]^{1/2} \to 0 $ as  $ n \to \infty$. This means that $D_{\lambda ,m}^{1/2}({f_n} - {p_n}) \to 0$ as $n \to \infty$. Since we have already known that
\[
D_{\lambda ,m}^{1/2}(f_n - f) \to 0 \ \text{as } n \to \infty,
\]
which indicates that
\[
\lim_{n \to \infty} D_{\lambda,m}(f - p_n) = 0.
\]

We still need to show the latter part of the estimate. By using Lemma \ref{lm4.7} and \((3.4)\), there exists some positive constant ${C_2}$ such that
\begin{align*}
&D_{\lambda ,m}(f_n - p_n)\le 2{D_{\lambda ,m}}\left[ {\sum\limits_{k = m+1}^{n + m} {({w_{n,k}} - {w_{n,k + 1}})} {b_k}{z^k}} \right] + 2{D_{\lambda ,m}}\left[ {{w_{n,n + m + 1}}{b_{n + m +1}}{z^{n + m + 1}}} \right] \\
&\le 2(n + 1) \binom{n+m+1}{m} D_{\sigma ,m} \left[ \sum_{k = m+1}^{n + m} (w_{n,k} - w_{n,k + 1}) b_k z^k \right] + 2{D_{\lambda ,m}}\left[ {{w_{n,n + m + 1}}{b_{n + m + 1}}{z^{n + m + 1}}} \right] \\
&= 2(n + 1) \binom{n+m+1}{m} \sum_{k = m+1}^{n + m}
|w_{n,k} - w_{n,k+1}|^2 |b_k|^2 \binom{k}{m} + 2\binom{n+m+1}{m}|w_{n,n+m+1} b_{n+m+1}|^{2}\\
&\le 2L^2 D_{\sigma ,m}(g) + 2\binom{n+m+1}{m}|w_{n,n+m+1} b_{n+m+1}|^{2}\\
&\le C_2^2{D_{\sigma ,m}}(g) = C_2^2{D_{\lambda ,m+1}}(f). \tag{3.8}
\end{align*}
Combining this with Corollary \ref{co4.6}, we see that there exists some positive constant ${C_1}$ such that
\begin{equation}
D_{\lambda ,m}(f - f_n) \le  C_1^2 D_{\lambda ,m+1}(f). \tag{3.9}
\end{equation}
Hence,
\[
D_{\lambda,m}(f - p_n) \le C^2 D_{\lambda,m+1}(f), \tag{3.10}
\]
where $C = {C_1} + {C_2}$. The proof is completed.
\end{proof}

\begin{theorem}\label{tm4.9}
Let $(w_{n,k})_{n\ge0,\;k\ge m+1}$ satisfy Definition~\ref{def4.1}.
For any
\[
f(z)=\sum_{k=m+1}^{\infty} a_k z^k \in \widehat{\mathcal{H}}_{\mu,m+1},
\]
define
\[
p_n(z)=\sum_{k=m+1}^{m+n+1} w_{n,k} a_k z^k.
\]
Then
\[
\|f-p_n\|_{\mu,m} \longrightarrow 0
\quad \text{as } n\to\infty.
\]
\end{theorem}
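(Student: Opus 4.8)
The plan is to reduce the statement to the pointwise estimates already obtained in Lemma \ref{lm4.8} and then invoke the dominated convergence theorem over the circle. First I would recall that, by definition of the norm on $\widehat{\mathcal{H}}_{\mu,m}$ together with the additivity of the weighted Dirichlet integral over measures, one has the integral representation
\[
\| f - p_n \|_{\mu,m}^2 = D_{\mu,m}(f - p_n) = \int_{\mathbb{T}} D_{\lambda,m}(f - p_n)\, d\mu(\lambda),
\]
which holds because $f - p_n$ is a fixed element of $\widehat{\mathcal{H}}_{\mu,m}$ (note $\widehat{\mathcal{H}}_{\mu,m+1}\subseteq\widehat{\mathcal{H}}_{\mu,m}$ by the argument of Lemma \ref{lm4.4} applied to $\mu$, and $p_n$ is a polynomial, hence harmless) and $P_\mu = \int_{\mathbb{T}} P_{\delta_\lambda}\, d\mu(\lambda)$ by linearity of the Poisson integral, which transfers to $D_{\mu,m}$ by Fubini--Tonelli applied to the nonnegative integrand $|(f-p_n)^{(m)}(z)|^2 (1-|z|^2)^{m-1}$.

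Next I would verify the two hypotheses of dominated convergence for the family of functions $\lambda \mapsto D_{\lambda,m}(f - p_n)$ on the measure space $(\mathbb{T},\mu)$. The pointwise convergence $D_{\lambda,m}(f - p_n) \to 0$ for $\mu$-almost every $\lambda$ is exactly the first conclusion of Lemma \ref{lm4.8}. For the domination, the second conclusion of Lemma \ref{lm4.8} gives $D_{\lambda,m}(f - p_n) \le C^2 D_{\lambda,m+1}(f)$ with $C$ independent of $n$ and $\lambda$, and the function $\lambda \mapsto C^2 D_{\lambda,m+1}(f)$ is $\mu$-integrable, again by Lemma \ref{lm4.8}, since $\int_{\mathbb{T}} D_{\lambda,m+1}(f)\, d\mu(\lambda) < \infty$ as $f \in \widehat{\mathcal{H}}_{\mu,m+1}$. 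Applying the Lebesgue dominated convergence theorem then yields
\[
\lim_{n\to\infty} \| f - p_n \|_{\mu,m}^2 = \lim_{n\to\infty}\int_{\mathbb{T}} D_{\lambda,m}(f - p_n)\, d\mu(\lambda) = \int_{\mathbb{T}} \lim_{n\to\infty} D_{\lambda,m}(f - p_n)\, d\mu(\lambda) = 0,
\]
which is the desired conclusion.

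The only genuinely delicate point is justifying the interchange $D_{\mu,m}(h) = \int_{\mathbb{T}} D_{\lambda,m}(h)\, d\mu(\lambda)$ for $h = f - p_n \in \widehat{\mathcal{H}}_{\mu,m}$; everything else is a bookkeeping assembly of results proved earlier in the paper. This interchange is a Fubini--Tonelli argument: since $P_\mu(z) = \int_{\mathbb{T}} \frac{1-|z|^2}{|z-\zeta|^2}\, d\mu(\zeta)$ and the integrand $\frac{1}{m!(m-1)!}|h^{(m)}(z)|^2 (1-|z|^2)^{m-1} \cdot \frac{1-|z|^2}{|z-\zeta|^2} \ge 0$ is jointly measurable on $\mathbb{D}\times\mathbb{T}$, Tonelli's theorem permits swapping $\int_{\mathbb{D}} dA(z)$ and $\int_{\mathbb{T}} d\mu(\zeta)$ without any integrability precondition, and the inner integral over $\mathbb{D}$ is precisely $D_{\zeta,m}(h)$. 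I would also remark that $p_n \in \widehat{\mathcal{H}}_{\mu,m}$ needs a brief justification — being a polynomial with no terms of degree below $m+1$, it lies in $\widehat{\mathcal{H}}_{\sigma,m}$, hence in $\widehat{\mathcal{H}}_{\lambda,m}$ for every $\lambda$ by Lemma \ref{lm4.7}, and then in $\widehat{\mathcal{H}}_{\mu,m}$ by the same measure-decomposition identity — so that $f - p_n$ is a legitimate element of the space and the norm identity above makes sense.
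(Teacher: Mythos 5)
Your proposal is correct and follows essentially the same route as the paper: both use the pointwise convergence and the uniform bound $D_{\lambda,m}(f-p_n)\le C^2 D_{\lambda,m+1}(f)$ from Lemma \ref{lm4.8} together with the dominated convergence theorem. Your explicit Tonelli justification of the disintegration $D_{\mu,m}(h)=\int_{\mathbb{T}}D_{\lambda,m}(h)\,d\mu(\lambda)$ is a detail the paper leaves implicit, but it is the same argument.
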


\begin{proof}
The conclusion follows directly from Lemma~\ref{lm4.8} by an application of the dominated convergence theorem.
\end{proof}

This result shows that although standard Taylor partial sums may diverge in
higher-order weighted Dirichlet spaces, appropriately modified polynomial
approximations recover norm convergence.
The construction provides a flexible and robust framework for polynomial
approximation beyond the classical setting.

\section{Dirac Measure Combinations and Closed-Form Solutions}

The results obtained in the previous sections are formulated for general finite positive Borel measures on $\mathbb{T}$.
In the special case where the measure $\mu$ is a finite linear combination of Dirac masses, the abstract approximation scheme developed earlier admits an explicit and computable realization. This allows us to close the abstract theory developed in the previous sections with a fully explicit summability scheme.

The purpose of this section is to derive closed-form modified Taylor polynomials in this setting and to establish their norm convergence in $\widehat{\mathcal{H}}_{\mu,m}$.

\subsection*{Recall of the Weight Array}

Throughout this section, we work with the same weight array
$(w_{n,k})_{n\ge0,\,k\ge m}$ introduced in Definition~3.1,
which satisfies conditions (3.1)--(3.4) therein.
For the reader's convenience, we recall that $w_{n,k}=0$ for $k>m+n$
and that the modification of Taylor coefficients occurs only in a finite range. For intuition, the array $(w_{n,k})$ can be viewed as a bounded lower-triangular matrix, encoding a finite-range modification of Taylor coefficients.

The following lemma shows that the coefficient modification induced by
$(w_{n,k})$ preserves norm convergence in the classical weighted Dirichlet space.

\begin{lemma}\label{lm5.2}
Let \( (w_{n,k})_{n \ge 0,\, k \ge m} \) satisfy \((3.1)\)--\((3.4)\).
For \( g(z)=\sum_{k=0}^{\infty} b_k z^k \in \mathcal{H}_{\sigma,m-1} \), define
\[
g_n(z):=\sum_{k=m-1}^{n+m-1} w_{n,k+1}\, b_k z^k.
\]
Then
\[
D_{\sigma,m-1}^{1/2}(g_n-g)\to 0
\quad\text{as } n\to\infty.
\tag{4.1}
\]
\end{lemma}

\begin{proof}
  Given $ \varepsilon >0 $, we choose a sufficiently large $N\in\mathbb{N}$ such that $\sum\limits_{k = N}^\infty  {|{b_k}{|^2}} \binom{k}{m - 1}<{\varepsilon ^2}$. A direct computation yields
  \begin{align*}
  D_{\sigma ,m - 1}(g_n)
  &= D_{\sigma ,m - 1}\left( \sum_{k = m - 1}^{n + m - 1} w_{n,k + 1} b_k z^k \right) \\
  &= \sum_{k = m - 1}^{n + m - 1} |w_{n,k + 1}|^2\, |b_k|^2 \binom{k}{m - 1}\\
  & \le {M^2}\sum_{k = m - 1}^{n + m - 1} |b_k|^2 \binom{k}{m - 1} \le {M^2}{D_{\sigma ,m - 1}}(g) < \infty ,
  \end{align*}
  and
  \begin{align*}
  {D_{\sigma ,m - 1}}({g_n} - g)
  &={D_{\sigma ,m - 1}}\left( {\sum\limits_{k = m - 1}^\infty  {{b_k}} {z^k} - \sum\limits_{k = m - 1}^{n + m - 1} {{w_{n,k + 1}}} {b_k}{z^k}} \right)\\
  &={D_{\sigma ,m - 1}}\left[ {\sum\limits_{k = m - 1}^{N - 1} {{b_k}(1 - {w_{n,k + 1}}){z^k} + \sum\limits_{k = N}^\infty  {{b_k}{z^k} - \sum\limits_{k = N}^{n + m - 1} {{w_{n,k + 1}}{b_k}{z^k}} } } } \right]\\
  &\le 2{D_{\sigma ,m - 1}}\left[ {\sum\limits_{k = m - 1}^{N - 1} {{b_k}} (1 - {w_{n,k + 1}}){z^k}} \right]
  + 4 D_{\sigma ,m - 1} \left( \sum_{k = N}^{\infty} b_k z^k \right) \\
  &\quad + 4 D_{\sigma ,m - 1} \left( \sum_{k = N}^{n + m - 1} w_{n,k + 1} b_k z^k \right)\\
  &\le 2{D_{\sigma ,m - 1}}\left[ {\sum\limits_{k = m - 1}^{N - 1} {{b_k}(1 - {w_{n,k + 1}}){z^k}} } \right] + 4(1 + M)^2{\varepsilon ^2}.
  \end{align*}
By \((3.3)\), ${D_{\sigma ,m - 1}}\left[ {\sum\limits_{k = m - 1}^{N - 1} {{b_k}(1 - {w_{n,k + 1}}){z^k}} } \right]$ tends to zero as $ n \to \infty$. Therefore, $$\limsup\limits_{n \to \infty} D_{\sigma, m - 1}^{1/2}(g_n - g) \le 2(1+M){\varepsilon}.$$
Since $\varepsilon>0$ is arbitrary, the conclusion follows.
\end{proof}

\begin{theorem}\label{tm5.3}
For any \( f \in \widehat{\mathcal{H}}_{\lambda,m} \),
there exists a sequence \( \{f_n\} \subset \mathcal{H}_{\lambda,m} \)
such that
\[
D_{\lambda,m}(f-f_n)\to 0
\quad\text{as } n\to\infty.
\]
\end{theorem}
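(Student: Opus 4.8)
The strategy is to transplant the proof of Theorem \ref{tm4.9} down by one order: although the ordinary partial sums of $f$ may fail to converge (Theorem \ref{tm3.2}), the approximants $f_n$ need only lie in $\mathcal{H}_{\lambda,m}$, so we are free to build them from the well-behaved decomposition supplied by the local Douglas formula rather than from the Taylor coefficients of $f$ directly.

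First I would fix any array $(w_{n,k})_{n\ge 0,\,k\ge m}$ satisfying conditions \((4.1)\)--\((4.3)\) of Definition \ref{def5.1}; such arrays plainly exist (for instance $w_{n,k}=1$ for $m\le k\le m+n$ and $w_{n,k}=0$ otherwise). Given $f\in\widehat{\mathcal{H}}_{\lambda,m}$, since $\widehat{\mathcal{H}}_{\lambda,m}\subseteq\mathcal{H}_{\lambda,m}$, Lemma \ref{lm2.1} provides $\alpha\in\mathbb{C}$ and $g(z)=\sum_{k\ge 0}b_kz^k\in\mathcal{H}_{\sigma,m-1}$ with $f(z)=\alpha+(z-\lambda)g(z)$ and $D_{\lambda,m}(f)=D_{\sigma,m-1}(g)$.

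Next I would take $g_n(z):=\sum_{k=m-1}^{n+m-1}w_{n,k+1}b_kz^k$ exactly as in Lemma \ref{lm5.2} and set $f_n(z):=(z-\lambda)g_n(z)$. Since $g_n$ is a polynomial it belongs to $\mathcal{H}_{\sigma,m-1}$, so Lemma \ref{lm2.1} gives $f_n=0+(z-\lambda)g_n\in\mathcal{H}_{\lambda,m}$ (and each $f_n$ is a polynomial, as the paper's polynomial-approximation viewpoint requires). Then I would write $f-f_n=\alpha+(z-\lambda)(g-g_n)$ with $g-g_n\in\mathcal{H}_{\sigma,m-1}$, so part (i) of Lemma \ref{lm2.1} yields $D_{\lambda,m}(f-f_n)=D_{\sigma,m-1}(g-g_n)$, and the right-hand side tends to $0$ by Lemma \ref{lm5.2}. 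This completes the argument.

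There is no serious obstacle here: the analytic content is entirely absorbed into Lemmas \ref{lm2.1} and \ref{lm5.2}. The only points needing care are bookkeeping ones -- checking that $\widehat{\mathcal{H}}_{\lambda,m}\subseteq\mathcal{H}_{\lambda,m}$ so that Lemma \ref{lm2.1} applies, that the additive constant $\alpha$ survives the subtraction so that $f-f_n$ still has the form $\text{const}+(z-\lambda)\cdot(\text{element of }\mathcal{H}_{\sigma,m-1})$, and that discarding the coefficients $b_0,\dots,b_{m-2}$ in $g_n$ is harmless because the seminorm $D_{\sigma,m-1}$ ignores terms of degree below $m-1$. If one instead wants $f_n$ exhibited as an explicit modification $\sum_{k=m}^{n+m}w_{n,k}a_kz^k$ of the Taylor polynomial of $f$, one can expand $(z-\lambda)g_n$ and substitute the correspondence $a_k=b_{k-1}-\lambda b_k$ ($k\ge m$) together with Lemma \ref{lm4.7}, mirroring the estimate in Lemma \ref{lm4.8}; this is routine and I would defer it to a remark.
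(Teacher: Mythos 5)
Your proposal is correct and follows essentially the same route as the paper: decompose $f=\alpha+(z-\lambda)g$ via Lemma \ref{lm2.1}, set $g_n(z)=\sum_{k=m-1}^{n+m-1}w_{n,k+1}b_kz^k$ and $f_n=(z-\lambda)g_n$, and conclude from $D_{\lambda,m}(f-f_n)=D_{\sigma,m-1}(g-g_n)$ together with Lemma \ref{lm5.2}. Your added bookkeeping remarks (that only conditions \((4.1)\)--\((4.3)\) are needed and that the constant $\alpha$ is absorbed by Lemma \ref{lm2.1}(i)) are accurate but do not change the argument.
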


\begin{proof}
Using the decomposition provided by Lemma~\ref{lm2.1},
we reduce the approximation problem in $\widehat{\mathcal{H}}_{\lambda,m}$
to one in $\mathcal{H}_{\sigma,m-1}$. By Lemma \ref{lm2.1}, for \( f \in \widehat{\mathcal{H}}_{\lambda, m} \), it can be decomposed as \( f(z)= a + (z - \lambda) g(z)\), where $a\in\mathbb{C}$ and \( g(z) = \sum\limits_{k = 0}^\infty  {{b_k}{z^k}}  \in \mathcal{H}_{\sigma, m - 1} \). Define ${g_n}(z): = \sum\limits_{k = m - 1}^{n + m - 1} {{w_{n,k + 1}}} {b_k}{z^k}{\rm{ }}$, and set $ f_n(z) = (z - \lambda) g_n(z) $. Lemma \ref{lm2.1} indicates that ${D_{\lambda ,m}}(f - {f_n}) = {D_{\sigma ,m - 1}}(g - {g_n})$. Combining this with Lemma \ref{lm5.2}, we see that $D_{\lambda, m}(f - f_n) \to 0 \text{ as }  n \to \infty$.
\end{proof}

\begin{remark}
Theorem~\ref{tm5.3} shows that approximation in the local higher-order
Dirichlet space reduces entirely to the classical weighted Dirichlet space,
highlighting the structural role of the local Douglas formula.
\end{remark}

Although the approximants $f_n$ constructed above do not belong to
$\widehat{\mathcal{H}}_{\lambda,m}$, the following corollary shows that
a further explicit modification restores this property.

\begin{corollary}\label{co5.4}
Let \( f(z)=\sum_{k=m}^{\infty} a_k z^k \in \widehat{\mathcal{H}}_{\lambda,m} \).
Define
\[
q_n(z)
=\sum_{k=m}^{n+m} a_k z^k
+\left(\sum_{k=n+m}^{\infty} a_k \lambda^{k-n-m}\right) z^{n+m}.
\]
Then \( q_n \in \widehat{\mathcal{H}}_{\lambda,m} \) and
\[
D_{\lambda,m}(f-q_n)\to 0
\quad\text{as } n\to\infty.
\]
\end{corollary}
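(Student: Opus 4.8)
The plan is to recognize $q_n$ as an instance of the construction in Theorem \ref{tm5.3} (equivalently Lemma \ref{lm5.2}) for a particular, explicit choice of the weight array $(w_{n,k})$, and then simply verify that this array satisfies conditions (4.1)--(4.3) so that the convergence $D_{\lambda,m}(f-q_n)\to 0$ follows. First I would write $f(z)=a+(z-\lambda)g(z)$ with $g(z)=\sum_{k\geq 0}b_kz^k\in\mathcal{H}_{\sigma,m-1}$ as in Lemma \ref{lm2.1}, recalling the coefficient relations $a=\lambda b_0$, $b_{k-1}=\lambda b_k$ for $1\le k\le m-1$, and $b_{k-1}-\lambda b_k=a_k$ for $k\ge m$. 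The key computational step is to express $\left(\sum_{k=n+m}^\infty a_k\lambda^{k-n-m}\right)$ as a telescoping sum: since $a_k=b_{k-1}-\lambda b_k$, one gets $\sum_{k=n+m}^\infty \lambda^{k-n-m}a_k = \sum_{k=n+m}^\infty \lambda^{k-n-m}(b_{k-1}-\lambda b_k) = \lambda^{-1}b_{n+m-1}$ (the sum telescopes, and $\lambda^{k-n-m}b_k\to 0$ since $b_k\to 0$ and $|\lambda|=1$; more carefully, one uses that $g\in\mathcal{H}_{\sigma,m-1}$ forces $b_k\to 0$, in fact $\sum|b_k|^2\binom{k}{m-1}<\infty$). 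Hence the "tail correction" coefficient multiplying $z^{n+m}$ is exactly $\lambda^{-1}b_{n+m-1}$, which is a single Taylor coefficient of $g$, not an infinite sum — this is what makes the closed form work.

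With this identity in hand, I would check that $q_n(z)=(z-\lambda)g_n(z)$ where $g_n(z)=\sum_{k=m-1}^{n+m-1}w_{n,k+1}b_kz^k$ for the weight array
\[
w_{n,k}=\begin{cases} 1, & m\le k\le n+m-1,\\ 0, & k>n+m,\end{cases}
\]
with $w_{n,n+m}$ chosen so that the $z^{n+m-1}$ term of $g_n$ produces, after multiplication by $(z-\lambda)$, precisely the coefficient $\lambda^{-1}b_{n+m-1}$ on $z^{n+m}$. Concretely, expanding $(z-\lambda)g_n(z)$ and matching against $\sum_{k=m}^{n+m}a_kz^k+\lambda^{-1}b_{n+m-1}z^{n+m}$, one finds that taking $w_{n,k}=1$ for $m\le k\le n+m-1$ and adjusting the single boundary weight makes the two expressions agree; the telescoping in Lemma \ref{lm2.1}'s proof (as replayed in Lemma \ref{lm3.1}) handles the bookkeeping. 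Then (4.1) holds by construction, (4.2) holds with $M=1$ (all nonzero weights are $1$ or a bounded boundary term), and (4.3) holds since for fixed $k$, $w_{n,k}=1$ for all $n\ge k-m+1$. Lemma \ref{lm5.2} (or directly Theorem \ref{tm5.3}) then gives $D_{\sigma,m-1}(g_n-g)\to 0$, and Lemma \ref{lm2.1}(i) converts this to $D_{\lambda,m}(f-q_n)=D_{\sigma,m-1}(g-g_n)\to 0$. Finally, $q_n$ is a polynomial, so $q_n\in\widehat{\mathcal{H}}_{\lambda,m}$ is immediate (polynomials trivially have finite Dirichlet-type integral and the stated series form starts at $z^m$).

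The main obstacle is purely the bookkeeping of the boundary term: one must verify carefully that the infinite tail $\sum_{k=n+m}^\infty a_k\lambda^{k-n-m}$ really does collapse to the finite quantity $\lambda^{-1}b_{n+m-1}$, which requires justifying that $\lambda^{j}b_{n+m+j}\to 0$ as $j\to\infty$ — a consequence of $b_k\to 0$, itself implied by $\sum_k|b_k|^2\binom{k}{m-1}<\infty$. A secondary point of care is that Lemma \ref{lm5.2} as stated requires conditions (4.1)--(4.3) on the array but \emph{not} (4.4); since our array has $w_{n,k}=1$ identically on its support except possibly at one boundary index, conditions (4.1)--(4.3) are trivially met and no oscillation-decay estimate is needed, so invoking Lemma \ref{lm5.2} is legitimate. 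Once the telescoping identity is pinned down, the rest is a direct application of the already-established machinery.
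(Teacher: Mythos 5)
Your overall strategy is exactly the paper's: realize $q_n$ as the output of the weight-array construction with $w_{n,k}=1$ on its support, invoke Lemma~\ref{lm5.2}/Theorem~\ref{tm5.3} (and you are right that condition (4.4) plays no role there), and identify the tail sum by telescoping. However, there are two concrete slips. First, the telescoping identity is off by a factor of $\lambda^{-1}$: since $a_k=b_{k-1}-\lambda b_k$, the partial sum $\sum_{k=n+m}^{N}\lambda^{k-n-m}a_k$ telescopes to $b_{n+m-1}-\lambda^{N+1-n-m}b_N$, so the tail equals $b_{n+m-1}$, not $\lambda^{-1}b_{n+m-1}$ (check the very first summand: for $k=n+m$ it is $\lambda^{0}(b_{n+m-1}-\lambda b_{n+m})$, so the surviving leading term is $b_{n+m-1}$ with no extra power of $\lambda$). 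Since the entire content of the corollary is the exact closed form of the correction coefficient, this must be repaired; with the correct value $b_{n+m-1}$ no ``boundary weight adjustment'' is needed at all, because taking every weight equal to $1$ already produces the right top coefficient, via $a_{n+m}+\lambda b_{n+m}=b_{n+m-1}$. (The paper's own proof then writes $q_n$ with the main sum running only to $n+m-1$, which is what the construction actually yields.)

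Second, the identity $q_n=(z-\lambda)g_n$ cannot hold literally: with $g_n(z)=\sum_{k=m-1}^{n+m-1}w_{n,k+1}b_kz^k$ and $w_{n,m}=1$, the product $(z-\lambda)g_n(z)$ carries the term $-\lambda b_{m-1}z^{m-1}$, which is absent from $q_n$, whose expansion starts at $z^m$. The paper handles this by setting $f_n=(z-\lambda)g_n$, obtaining $D_{\lambda,m}(f-f_n)\to0$ from Theorem~\ref{tm5.3}, and then passing to $q_n=f_n+\lambda b_{m-1}z^{m-1}$; since $D_{\lambda,m}$ annihilates polynomials of degree at most $m-1$, this modification costs nothing and places $q_n$ in $\widehat{\mathcal{H}}_{\lambda,m}$. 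You need that one extra step. With these two repairs your argument coincides with the paper's proof; the justification you give for $b_N\to0$ (from $\sum_k|b_k|^2\binom{k}{m-1}<\infty$) is exactly the one the paper uses.
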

\begin{proof}
Let ${w_{n,k}} = 1$ for $m \le k \le m + n$ and zero elsewhere. It is obvious that
\[{f_n}(z) = \sum\limits_{k = m}^{n + m} {{a_k}{z^k}}  + \lambda {b_{n + m}}{z^{n + m}} - \lambda {b_{m - 1}}{z^{m - 1}} \in \mathcal{H}_{\lambda, m}\]
Moreover, it follows from Theorem \ref{tm5.3} that ${D_{\lambda ,m}}(f - {f_n}) \to 0$ as $n \to \infty$. Set \[{q_n}(z) = \sum\limits_{k = m}^{n + m} {{a_k}{z^k}}  + \lambda {b_{n + m}}{z^{n + m}}.\]
Then ${q_n}(z) \in \widehat{\mathcal{H}}_{\lambda, m}$ and
\begin{align*}
q_n(z) &= \sum\limits_{k = m}^{n + m} a_k z^k  + \lambda b_{n + m} z^{n + m} \\
&= \sum\limits_{k = m}^{n + m - 1} a_k z^k + a_{n + m} z^{n + m} + \lambda b_{n + m} z^{n + m} \\
&= \sum\limits_{k = m}^{n + m - 1} a_k z^k  + b_{n + m - 1} z^{n + m}, \\
\end{align*}
where the third equality is from Lemma \ref{lm4.2}. Notice that for arbitrary $N \ge m + n$,
\begin{align*}
\sum_{k = n + m}^{N} a_k \lambda^{k - n - m} &= {\lambda ^{1 - n - m}}\sum\limits_{k = n + m}^N {{a_k}{\lambda ^{k - 1}}} \\
&={\lambda ^{1 - n - m}}\sum\limits_{k = n + m}^N {({b_{k - 1}} - \lambda {b_k}){\lambda ^{k - 1}}} \\
&={\lambda ^{1 - n - m}}\sum\limits_{k = n + m}^N {\left( {{b_{k - 1}}{\lambda ^{k - 1}} - {b_k}{\lambda ^k}} \right)} \\
&={\lambda ^{1 - n - m}}({b_{n + m - 1}}{\lambda ^{n + m - 1}} - {b_N}{\lambda ^N})\\
&={b_{n + m - 1}} - {b_N}{\lambda ^{N + 1 - n - m}},
\end{align*}
where ${b_N} \to 0$ as $n \to \infty$ since the sequence $\{ {b_k}\} $ satisfies $\displaystyle \sum_{k = m - 1}^{\infty} |b_k|^2 \binom{k}{m - 1} < \infty$.
Hence, $\sum\limits_{k = n + m}^\infty  {{a_k}{\lambda ^{k - n - m}}}  = {b_{n + m - 1}}$, and
\[{q_n}(z) = \sum\limits_{k = m}^{n + m - 1} {{a_k}} {z^k} + \left( {\sum\limits_{k = n + m}^\infty  {{a_k}} {\lambda ^{k - n - m}}} \right){z^{n + m}}{\rm{ }}\] satisfying
\[{D_{\lambda ,m}}(f - {q_n}) \to 0 \quad as \quad n \to \infty\] immediately.
\end{proof}

Now we define the difference quotient operator ${Q_\lambda }:{D_{\lambda ,m}} \to {D_{\sigma ,m - 1}}$ by
\[({Q_\lambda }f)(z) = \frac{{f(z) - f(\lambda )}}{{z - \lambda }},\quad z \in \mathbb{D}.\]
We equip the space $\widehat{\mathcal{H}}_{\lambda,m}$ with the norm
\begin{equation}
\left\| f \right\|_{\lambda ,m} = D_{\lambda ,m}^{\frac{1}{2}}(f) = D_{\sigma ,m - 1}^{\frac{1}{2}}(Q_\lambda f).
\tag{4.2}
\end{equation}
Let ${\lambda _1}, \cdots ,{\lambda _s}$ be $s$ distinct points on $\mathbb{T}$ and ${c_j} \ge 0$. Set
\[\mu  = {c_1}{\delta _{{\lambda _1}}} + {c_2}{\delta _{{\lambda _2}}} +  \cdots  + {c_s}{\delta _{{\lambda _s}}}.\]
We now consider the case where the measure $\mu$ is a finite sum of Dirac masses.
In this setting, functions in $\widehat{\mathcal{H}}_{\mu,m}$ admit a canonical factorization,
and the modified Taylor polynomials can be constructed via interpolation at finitely many boundary points.

Similar to the treatment of Section 2 in \cite{MR4832456}, each $f \in \widehat{\mathcal{H}}_{\mu,m}$ can be uniquely represented as
\begin{equation}
f(z) = a_0 + a_1 z + \cdots + a_{s-1} z^{s-1} + (z - \lambda_1)(z - \lambda_2) \cdots (z - \lambda_s) g(z), \tag{4.3}
\end{equation}
where $g := Q_{\lambda_1} Q_{\lambda_2} \cdots Q_{\lambda_s} f\in \mathcal{H}_{\sigma ,m - 1}$. We define
\begin{equation}
 \left\| f \right\|_{\mu ,m} := [\sum\limits_{k = 1}^s {{c_k}{D_{\sigma ,m - 1}}({Q_{{\lambda _k}}}f)}]^{\frac{1}{2}}. \tag{4.4}
 \end{equation}
For $f(z) = \sum\limits_{k = m}^\infty  {{a_k}{z^k}} \in \widehat{\mathcal{H}}_{\mu,m} $ and $\Lambda  = \{ {\lambda _j}:1 \le j \le s\} $ be a subset of distinct points in $\mathbb{T}$.
Define
\begin{equation}
P_{m,n}^\Lambda f(z): = \sum\limits_{k = m}^{n + m - s} {{a_k}{z^k}}  + b_{n + m - s + 1}^\Lambda {z^{n + m - s + 1}} +  \cdots  + b_{n + m}^\Lambda {z^{n + m}},~ \text{for each} \ n \ge s. \tag{4.5}
\end{equation}
The polynomial $P_{m,n}^\Lambda f$ coincides with the Taylor partial sum $S_{m,n}f$
up to a modification of the highest $s$ coefficients. By appropriating values of $b_i^\Lambda s$ $(n + m - s + 1 \le i \le n + m)$, we deduce the following equations. That is,
\[P_{m,n}^\Lambda f({\lambda _j}) = f({\lambda _j}), \quad 1 \le j \le s.\tag{4.6}\]

\begin{theorem}\label{tm5.1}
Let $f(z) = \sum\limits_{k = m}^\infty  {{a_k}{z^k}} \in \widehat{\mathcal{H}}_{\mu,m}$, $\Lambda  = \{ {\lambda _j}:1 \le j \le s\} $ be a subset of distinct points in $\mathbb{T}$ and
$$\mu  = {c_1}{\delta _{{\lambda _1}}} + {c_2}{\delta _{{\lambda _2}}} +  \cdots  + {c_s}{\delta _{{\lambda _s}}},$$
where \( c_j > 0 \) $\left( {1 \le j \le s} \right)$. Then the following statements hold.
\begin{itemize}
\item[(i)] The boundary values $f(\lambda_j)$ exist for all $1\le j\le s$.
\item[(ii)] For each $n\ge s$, there exist unique coefficients
$b_{n+m-s+1}^\Lambda,\ldots,b_{n+m}^\Lambda$ in (4.5)
such that (4.6) holds.
\end{itemize}

\begin{proof}
(i): By lemma \ref{lm2.1}, the series $f({\lambda_j})$  converges obviously. By Corollary \ref{co4.5},
 \[
 \begin{cases}
 a = {\lambda _j} b_0, \\
 b_{k - 1} = {\lambda _j} b_k, & \text{for } 1 \le k \le m - 1, \\
 b_{k - 1} - {\lambda _j} b_k = a_k, & \text{for } k \ge m.
 \end{cases}
 \]
 Then
 \begin{align*}
 {S_{m,n}}f({\lambda _j}) &= \sum\limits_{k = m}^{n + m} {{a_k}\lambda _j^k} =\sum\limits_{k = m}^{n + m} {({b_{k - 1}} - {\lambda _j} {b_k})\lambda _j^k} \\
                        &= \sum\limits_{k = m}^{n + m} {{b_{k - 1}}\lambda _j^k}  - \sum\limits_{k = m}^{n + m} {{b_k}\lambda _j^{k + 1}} = {b_{m - 1}}\lambda _j^m - {b_{n + m}}\lambda _j^{n + m + 1}\\
                        &= \sum\limits_{k = m}^{\infty} {{a_k}\lambda _j^k}  - b_{n + m} {\lambda _j^{n + m + 1}},
 \end{align*}
 since Corollary \ref{co5.4} gives us ${b_{m - 1}} = \sum\limits_{k = m}^\infty  {{a_k}\lambda _j^{k - m}} $. This implies $\mathop {\lim }\limits_{n \to \infty } {S_{m,n}}f({\lambda _j}) = \sum\limits_{k = m}^\infty  {{a_k}\lambda _j^k}  = f({\lambda _j})$.\\
 (ii): According to (i), the condition (4.6) is equivalent to
 \[b_{n + m - s + 1}^\Lambda  +  \cdots  + b_{n + m}^\Lambda \lambda _j^{s - 1} = \sum\limits_{{k} = n + m - s + 1}^\infty  {{a_k}\lambda _j^{k - (n + m - s + 1)}} ,\quad 1 \le j \le s.\]
 these equations can be represented as the matrix form $AB = C$, where $A$ is the Vandermonde matrix
 \[
 A = \begin{pmatrix}
 1 & \lambda_1 & \cdots & \lambda_1^{s-1} \\
 1 & \lambda_2 & \cdots & \lambda_2^{s-1} \\
 \vdots & \vdots & \ddots & \vdots \\
 1 & \lambda_s & \cdots & \lambda_s^{s-1}
 \end{pmatrix},
 \]
 and
 \[B = \left( {\begin{array}{*{20}{c}}
 {b_{n + m - s + 1}^\Lambda }\\
 {b_{n + m - s + 2}^\Lambda }\\
 \vdots \\
 {b_{n + m}^\Lambda }
 \end{array}} \right),\quad \quad C = \left( {\begin{array}{*{20}{c}}
 {\sum\limits_{k = n + m - s + 1}^\infty  {{a_k}} \lambda _1^{k - (n + m - s + 1)}}\\
 {\sum\limits_{k = n + m - s + 1}^\infty  {{a_k}} \lambda _2^{k - (n + m - s + 1)}}\\
 \vdots \\
 {\sum\limits_{k = n + m - s + 1}^\infty  {{a_k}} \lambda _s^{k - (n + m - s + 1)}}
 \end{array}} \right).\]
 Noting that $\det (A) = \prod\limits_{1 \le i < j \le s} {({\lambda _j} - {\lambda _i})} $, and that
 $\{ {\lambda _j}:1 \le j \le s\}$ is a subset of distinct points in $\mathbb{T}$, we conclude that $\det (A) \ne 0$ and $A$ is invertible.
 This proves that the coefficients \( b^{\Lambda}_{n + m - s + 1}, \ldots, b^{\Lambda}_{n + m} \) are uniquely determined.
\end{proof}
\end{theorem}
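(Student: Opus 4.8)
I would prove the two parts separately, with part~(i) supplying exactly the convergence fact that reduces part~(ii) to a finite-dimensional linear-algebra exercise.

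\emph{Part (i).} Since each $c_j>0$, finiteness of $D_{\mu,m}(f)=\sum_{k=1}^{s}c_k D_{\lambda_k,m}(f)$ forces $D_{\lambda_j,m}(f)<\infty$, i.e.\ $f\in\widehat{\mathcal H}_{\lambda_j,m}$ for every $j$. By the local Douglas formula (Lemma~\ref{lm2.1}) I write $f(z)=\alpha_j+(z-\lambda_j)g_j(z)$ with $g_j(z)=\sum_{k\ge0}b_k^{(j)}z^k\in\mathcal H_{\sigma,m-1}$, and I record the coefficient correspondence of Corollary~\ref{co4.5}: $a_k=b_{k-1}^{(j)}-\lambda_j b_k^{(j)}$ for $k\ge m$, together with $b_{k-1}^{(j)}=\lambda_j b_k^{(j)}$ for $1\le k\le m-1$. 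Summing the telescoping identity gives $S_{m,n}f(\lambda_j)=\sum_{k=m}^{n+m}\big(b_{k-1}^{(j)}-\lambda_j b_k^{(j)}\big)\lambda_j^{k}=b_{m-1}^{(j)}\lambda_j^{m}-b_{n+m}^{(j)}\lambda_j^{\,n+m+1}$. Because $\sum_k|b_k^{(j)}|^2\binom{k}{m-1}<\infty$ the terms $b_{n+m}^{(j)}$ tend to $0$, so the partial sums converge; and since $b_{m-1}^{(j)}\lambda_j^{m}=\alpha_j=f^*(\lambda_j)$, the series $\sum_{k\ge m}a_k\lambda_j^{k}$ converges to $f(\lambda_j)$. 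This is, up to notation, the computation already carried out in Corollary~\ref{co5.4}, which additionally yields the identity $\sum_{k\ge\ell}a_k\lambda_j^{\,k-\ell}=b_{\ell-1}^{(j)}$ valid for every $\ell\ge m$.

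\emph{Part (ii).} Using part~(i) to split $f(\lambda_j)=\sum_{k=m}^{n+m-s}a_k\lambda_j^{k}+\sum_{k=n+m-s+1}^{\infty}a_k\lambda_j^{k}$, the interpolation requirement $P_{m,n}^{\Lambda}f(\lambda_j)=f(\lambda_j)$ is equivalent to $\sum_{i=n+m-s+1}^{n+m}b_i^{\Lambda}\lambda_j^{i}=\sum_{k=n+m-s+1}^{\infty}a_k\lambda_j^{k}$; dividing by $\lambda_j^{\,n+m-s+1}$ (licit since $|\lambda_j|=1$) turns this into $\sum_{\ell=0}^{s-1}b_{n+m-s+1+\ell}^{\Lambda}\lambda_j^{\ell}=\sum_{k=n+m-s+1}^{\infty}a_k\lambda_j^{\,k-(n+m-s+1)}$ for $1\le j\le s$. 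The right-hand sides are finite by part~(i): for $n\ge s$ one has $n+m-s+1\ge m$, so each tail series falls within the range covered by the identity of Corollary~\ref{co5.4}. Stacking the $s$ equations is the matrix system $AB=C$, where $A$ is the Vandermonde matrix with nodes $\lambda_1,\dots,\lambda_s$, $B=(b_{n+m-s+1}^{\Lambda},\dots,b_{n+m}^{\Lambda})^{\mathsf T}$, and $C$ is the column of right-hand sides. Since the $\lambda_j$ are distinct, $\det A=\prod_{1\le i<j\le s}(\lambda_j-\lambda_i)\ne0$, so $A$ is invertible and $B=A^{-1}C$ is the unique solution; this gives existence and uniqueness of the modified coefficients.

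The substance of the argument is concentrated in part~(i): one must know that the Taylor series of $f$ genuinely converges at the boundary nodes $\lambda_j$ to the value $f^*(\lambda_j)$ produced by Lemma~\ref{lm2.1}, which is not automatic for $\widehat{\mathcal H}_{\mu,m}$ and rests on both the local Douglas decomposition and the square-summability of $\{\,b_k^{(j)}\sqrt{\binom{k}{m-1}}\,\}$. Once that is in hand, part~(ii) is pure linear algebra through Vandermonde invertibility; the only genuine bookkeeping point is to check that the hypothesis $n\ge s$ keeps the tail index $n+m-s+1$ in the range $\ge m$ where Corollary~\ref{co5.4} is applicable.
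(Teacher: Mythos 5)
Your proof is correct and follows essentially the same route as the paper's: part (i) rests on the same telescoping identity $S_{m,n}f(\lambda_j)=b_{m-1}^{(j)}\lambda_j^{m}-b_{n+m}^{(j)}\lambda_j^{\,n+m+1}$ with $b_{n+m}^{(j)}\to 0$ from square-summability, and part (ii) on the same Vandermonde system $AB=C$. If anything, your write-up is slightly more careful than the paper's, since you make explicit that $c_j>0$ forces $f\in\widehat{\mathcal{H}}_{\lambda_j,m}$ for each $j$ and that $n\ge s$ keeps the tail index $n+m-s+1$ in the range where the identity of Corollary \ref{co5.4} applies.
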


\begin{theorem}\label{tm5.6}
Let $f \in \widehat{\mathcal{H}}_{\mu,m}$, $\Lambda  = \{ {\lambda _j}:1 \le j \le s\} $ be a subset of distinct points in $\mathbb{T}$ and
$$\mu  = {c_1}{\delta _{{\lambda _1}}} + {c_2}{\delta _{{\lambda _2}}} +  \cdots  + {c_s}{\delta _{{\lambda _s}}},$$
where $c_j > 0 \left( {1 \le j \le s} \right)$. Then
\[{\left\| {P_{m,n}^\Lambda f - f} \right\|_{\mu,m }} \to 0 \quad as \quad n \to \infty. \]
\end{theorem}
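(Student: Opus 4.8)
\emph{Proof plan.} The strategy is to split the discrete norm $\|\cdot\|_{\mu,m}$ into its local pieces, reduce to one point $\lambda_j$ at a time, and then exploit the interpolation identity $P^\Lambda_{m,n}f(\lambda_j)=f(\lambda_j)$ from Theorem \ref{tm5.1}(ii) to cancel exactly the coefficient block whose binomial weight is too large — the same block that produces the divergence in Theorem \ref{tm3.2}.

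First I would reduce to a single point. Combining the defining identities (4.6) and (4.8), for every $h\in\widehat{\mathcal{H}}_{\mu,m}$ one has $\|h\|_{\mu,m}^2=\sum_{j=1}^{s}c_j D_{\sigma,m-1}(Q_{\lambda_j}h)=\sum_{j=1}^{s}c_j D_{\lambda_j,m}(h)$. Since $n\ge s$, the polynomial $P^\Lambda_{m,n}f$ has Taylor expansion supported in degrees $\ge m$, so it lies in $\widehat{\mathcal{H}}_{\mu,m}$, and hence so does $f-P^\Lambda_{m,n}f$. As $c_j>0$ and the sum is finite, it suffices to prove $D_{\lambda_j,m}(f-P^\Lambda_{m,n}f)\to 0$ for each fixed $j$.

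Fix $j$ and write $\lambda=\lambda_j$. By Theorem \ref{tm5.1}(ii), $P^\Lambda_{m,n}f(\lambda)=f^*(\lambda)=:a$, so by Lemma \ref{lm2.1} we may write $f=a+(z-\lambda)g$ with $g=\sum_{k\ge0}b_kz^k\in\mathcal{H}_{\sigma,m-1}$ and $P^\Lambda_{m,n}f=a+(z-\lambda)h_n$ with $h_n=\sum_{k=0}^{n+m-1}d_kz^k$, whence $D_{\lambda,m}(f-P^\Lambda_{m,n}f)=D_{\sigma,m-1}(g-h_n)$. Reading off the Taylor coefficients in the decomposition of Lemma \ref{lm2.1}, one sees that $h_n$ and $g$ obey the same recursion, with the same seed $a$ and the same inputs $a_m,\dots,a_{n+m-s}$ — because $P^\Lambda_{m,n}f$ and $f$ share all coefficients up to degree $n+m-s$ — so $d_k=b_k$ for $0\le k\le n+m-s$. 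Therefore
\[D_{\sigma,m-1}(g-h_n)=\sum_{k=n+m-s+1}^{n+m-1}|b_k-d_k|^2\binom{k}{m-1}+\sum_{k=n+m}^{\infty}|b_k|^2\binom{k}{m-1}.\]
The second sum is the tail of the convergent series $D_{\sigma,m-1}(g)$, hence $\to 0$; the first has the fixed number $s-1$ of terms, and for each such $k$ one has $|b_k|^2\binom{k}{m-1}\to 0$ (general term of a convergent series), so it remains to show $|d_k|^2\binom{k}{m-1}\to 0$. Solving the triangular top relations downward gives $d_k=\sum_{i=k+1}^{n+m}b_i^\Lambda\lambda^{\,i-k-1}$, hence $|d_k|\le s\max_i|b_i^\Lambda|$. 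But $(b_i^\Lambda)$ solves the Vandermonde system $AB=C$ of Theorem \ref{tm5.1}, with $A$ invertible and independent of $n$ and $C_i=\sum_{k\ge n+m-s+1}a_k\lambda_i^{k-(n+m-s+1)}$, which by the telescoping identity of Corollary \ref{co5.4} equals the $(n+m-s)$-th Taylor coefficient $b_{n+m-s}^{(i)}$ of $Q_{\lambda_i}f\in\mathcal{H}_{\sigma,m-1}$. Thus $\max_i|b_i^\Lambda|\le\|A^{-1}\|\max_i|b_{n+m-s}^{(i)}|$, and since $|b_{n+m-s}^{(i)}|^2\binom{n+m-s}{m-1}\to 0$ while $\binom{n+m-1}{m-1}/\binom{n+m-s}{m-1}$ stays bounded, we get $|d_k|^2\binom{k}{m-1}\to 0$. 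Summing the finite window and the tail yields $D_{\lambda_j,m}(f-P^\Lambda_{m,n}f)\to 0$, and summing over $j$ against the weights $c_j$ completes the proof.

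The main obstacle — and where the whole design of $P^\Lambda_{m,n}f$ is used — is the cancellation $d_k=b_k$ for $k\le n+m-s$: interpolation at $\lambda_1,\dots,\lambda_s$ forces the difference-quotient coefficients to vanish up to degree $n+m-s$, thereby removing the block $|b_{n+m-s}-d_{n+m-s}|^2\binom{n+m-s+1}{m}$ whose weight $\binom{n+m-s+1}{m}$ exceeds $\binom{n+m-1}{m-1}$ by a factor of order $n$ — precisely the divergence mechanism behind Theorem \ref{tm3.2}. Once that block is gone only a fixed-width window survives, and there the $n$-uniform bound on $(b_i^\Lambda)$ coming from the \emph{fixed} inverse Vandermonde matrix, together with the harmless binomial ratios, closes the estimate; the rest is bookkeeping of the coefficient recursions in the correct index ranges.
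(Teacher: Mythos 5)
Your proof is correct, but it follows a genuinely different route from the paper. The paper argues by induction on the number $s$ of mass points: the base case $s=1$ is Corollary \ref{co5.4}, and the inductive step combines the two $(s-1)$-point approximants $P^{\Lambda_1}_{m,n-1}f$ and $P^{\Lambda_2}_{m,n-1}f$ via the partition of unity $a(z-\lambda_1)+b(z-\lambda_2)=1$, identifies the resulting function $h_{m,n}$ with $P^{\Lambda}_{m,n}f$ through the uniqueness clause of Theorem \ref{tm5.1}(ii), and then estimates each difference quotient $Q_{\lambda_k}(h_{m,n}-f)$ separately. You instead work directly at each $\lambda_j$: the forward coefficient recursion of Lemma \ref{lm2.1} shows that the difference-quotient coefficients of $f$ and of $P^{\Lambda}_{m,n}f$ agree up to index $n+m-s$ (because the Taylor coefficients agree there and the seeds coincide by interpolation), leaving only a fixed window of $s-1$ terms plus a convergent tail; the window is controlled by solving the backward recursion, converting the right-hand side of the Vandermonde system into the single Taylor coefficient $b^{(i)}_{n+m-s}$ of $Q_{\lambda_i}f$ via the telescoping identity from Corollary \ref{co5.4}, and using that $|b^{(i)}_{n+m-s}|^2\binom{n+m-s}{m-1}\to 0$ together with the boundedness of $\binom{n+m-1}{m-1}/\binom{n+m-s}{m-1}$ and of the fixed inverse Vandermonde matrix. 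Each step checks out (including the implicit use of $D_{\lambda_j,m}(f)<\infty$ for every $j$, which follows from $c_j>0$). Your approach buys an explicit, quantitative, non-inductive argument that makes visible exactly which coefficient block causes the divergence in Theorem \ref{tm3.2} and how the interpolation conditions cancel it; the paper's approach buys a softer argument that never inverts the Vandermonde system explicitly and recycles the singleton case wholesale, at the cost of an induction and a uniqueness identification.
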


\begin{proof}
When $\Lambda = {\Lambda _1}= \{ {\lambda _1}\} $ is a singleton, the result has been proven directly in Corollary \ref{co5.4}. We obtain
\[P_{m,n}^{{\Lambda _1}}f(z) = \sum\limits_{k = m}^{n + m - 1} {{a_k}{z^k}}  + \left( {\sum\limits_{k = n + m}^\infty  {{a_k}\lambda _1^{k - n - m}} } \right){z^{n + m}},\]
which also implies that
\[{\left\| {P_{m,n}^{{\Lambda _1}}f(z) - f(z)} \right\|_{{\lambda _1},m}} \to 0, \quad n \to \infty.\]
Suppose the result holds for \( s - 1 \) distinct points in \( \mathbb{T} \), and let \( \Lambda = \{\lambda_1, \ldots, \lambda_s\} \) be a set of \( s \) distinct points in \( \mathbb{T} \). To proceed, we consider the following two subsets of \( \Lambda \),
\[
\Lambda_1 = \{\lambda_1, \lambda_3, \ldots, \lambda_s\}, \quad
\Lambda_2 = \{\lambda_2, \lambda_3, \ldots, \lambda_s\}, \text{consisting of} \ s - 1 \text{ distinct points}.
\]
Let $P_{m,n - 1}^{{\Lambda _1}}f$ and $P_{m,n - 1}^{{\Lambda _2}}f$ denote the corresponding modified Taylor polynomials. By our assumption
\begin{align*}
P_{m,n - 1}^{{\Lambda _1}}f &= \sum\limits_{k = m}^{n + m - s} {{a_k}{z^k}}  + b_{n + m - s + 1}^{{\Lambda _1}}{z^{n + m - s + 1}} +  \cdots  + b_{n + m - 1}^{{\Lambda _1}}{z^{n + m - 1}},\\
P_{m,n - 1}^{{\Lambda _2}}f &= \sum\limits_{k = m}^{n + m - s} {{a_k}{z^k}}  + b_{n + m - s + 1}^{{\Lambda _2}}{z^{n + m - s + 1}} +  \cdots  + b_{n + m - 1}^{{\Lambda _2}}{z^{n + m - 1}}.
\end{align*}
Furthermore,
\begin{align}
P^{\Lambda_1}_{n-1} f(\lambda_j) &= f(\lambda_j), \hspace{2em} j \in \{1,3,4,\ldots,s\}, \tag{4.7} \\
P^{\Lambda_2}_{n-1} f(\lambda_j) &= f(\lambda_j), \hspace{2em} j \in \{2,3,4,\ldots,s\} \tag{4.8}
\end{align}
such that
\begin{equation}
{\left\| {P_{m,n - 1}^{{\Lambda _1}}f - f} \right\|_{{\mu _1},m}} \to 0\quad and \quad {\left\| {P_{m,n - 1}^{{\Lambda _2}}f - f} \right\|_{{\mu _2},m}} \to 0 \tag{4.9}
\end{equation}
as $n\to\infty$, where
\[
\mu_1 = c_1 \delta_{\lambda_1} + c_3 \delta_{\lambda_3} + \cdots + c_s \delta_{\lambda_s},
\]
and
\[
{\mu _2} = {c_2}{\delta _{{\lambda _2}}} + {c_3}{\delta _{{\lambda _3}}} +  \cdots  + {c_s}{\delta _{{\lambda _s}}}.
\]
Set \( a = (\lambda_2 - \lambda_1)^{-1} \) and \( b = (\lambda_1 - \lambda_2)^{-1} \), then
\begin{equation}
a(z - \lambda_1) + b(z - \lambda_2) = 1, \quad z \in \mathbb{C}. \tag{4.10}
\end{equation}
Define
\[{h_{m,n}}(z) = a(z - {\lambda _1})P_{m,n - 1}^{{\Lambda _2}}f(z) + b(z - {\lambda _2})P_{m,n - 1}^{{\Lambda _1}}f(z).\]
According to (4.9), it is obvious that the first $n+s-1$ coefficients of ${h_{m,n}}$ coincide with the first $n+s-1$ coefficients of $f$. Moreover, applying \((4.7)\) and \((4.8)\), we obtain
\[{h_{m,n}}({\lambda _1}) = P_{m,n - 1}^{{\Lambda _1}}f({\lambda _1}) = f({\lambda _1}),\]
\[{h_{m,n}}({\lambda _2}) = P_{m,n - 1}^{{\Lambda _2}}f({\lambda _2}) = f({\lambda _2}).\]
For each ${\lambda _j} \in \left\{ {{\lambda _3},{\lambda _{4,}} \cdots {\lambda _s}} \right\}$,
\[{h_{m,n}}({\lambda _j}) = a({\lambda _j} - {\lambda _1})P_{m,n - 1}^{{\Lambda _2}}f({\lambda _j}) + b({\lambda _j} - {\lambda _2})P_{m,n - 1}^{{\Lambda _1}}f({\lambda _j})=f({\lambda_j}).\]
By Theorem \ref{tm5.1} (ii), if such $P_{m,n}^\Lambda f$ exists, it must be unique. Thus, $P_{m,n}^\Lambda f={h_{m,n}}$.
Now we aim to show that $${\left\| {{h_{m,n}} - f} \right\|_{\mu ,m}} \to 0  \quad as \quad n \to \infty.$$
According to (4.4) and (4.9), we know that\\
\begin{equation}
{D_{\sigma ,m - 1}}({Q_{{\lambda _k}}}(P_{m,n - 1}^{{\Lambda _1}}f - f)) \to 0  \quad \text{for} \ 1 \leq k \leq s \ \text{~with~} \ k \ne 2 , \tag{4.11}
\end{equation}
and
\begin{equation}
{D_{\sigma ,m - 1}}({Q_{{\lambda _k}}}(P_{m,n - 1}^{{\Lambda _2}}f - f)) \to 0 \quad \text{for} \ 2 \leq k \leq s. \tag{4.12}
\end{equation}
By \((4.10)\), we have
\[{h_{m,n}}(z) - f(z) = a(z - {\lambda _1})(P_{m,n - 1}^{{\Lambda _2}}f(z) - f(z)) + b(z - {\lambda _2})(P_{m,n - 1}^{{\Lambda _1}}f(z) - f(z)).\]
Noting that
$$P_{m,n - 1}^{{\Lambda _1}}f({\lambda _k}) - f({\lambda _k}) = P_{m,n - 1}^{{\Lambda _2}}f({\lambda _k}) - f({\lambda _k}) = 0 \quad \text{for} \ 3 \le k \le s,$$
we have
\[{Q_{{\lambda _k}}}({h_{m,n}} - f) = a(z - {\lambda _1}){Q_{{\lambda _k}}}(P_{m,n - 1}^{{\Lambda _2}}f - f) + b(z - {\lambda _2}){Q_{{\lambda _k}}}(P_{m,n - 1}^{{\Lambda _1}}f - f).\]
Combining this with \((4.11)\) and \((4.12)\)
\begin{align*}
{D_{\sigma ,m - 1}}({Q_{{\lambda _k}}}({h_{m,n}} - f)) &\le 4\left| a \right|{D_{\sigma ,m - 1}}({Q_{{\lambda _k}}}(P_{m,n - 1}^{{\Lambda _2}}f - f)) + 4\left| b \right|{D_{\sigma ,m - 1}}({Q_{{\lambda _k}}}(P_{m,n - 1}^{{\Lambda _1}}f - f))\\
&\to 0
\end{align*}
as $ n \to \infty $.

We still need to show that it is also valid for \( k = 1 \) and \( k = 2 \). Since ${h_{m,n}}({\lambda _1}) - f({\lambda _1}) = 0 $, we obtain
\begin{align*}
{Q_{{\lambda _1}}}({h_{m,n}} - f)(z) &= \frac{{a(z - {\lambda _1})(P_{m,n - 1}^{{\Lambda _2}}f(z) - f(z)) + b(z - {\lambda _2})(P_{m,n - 1}^{{\Lambda _1}}f(z) - f(z))}}{{z - {\lambda _1}}}\\
                                   &=a(P_{m,n - 1}^{{\Lambda _2}}f - f)(z) + b(z - {\lambda _2}){Q_{{\lambda _1}}}(P_{m,n - 1}^{{\Lambda _1}}f - f)(z),
\end{align*}
which, together with \((4.9)\) and \((4.11)\), indicates that
\begin{align*}
{D_{\sigma ,m - 1}}({Q_{{\lambda _1}}}({h_{m,n}} - f)) &\le 2\left| a \right|{D_{\sigma ,m - 1}}(P_{m,n - 1}^{{\Lambda _2}}f - f) + 4\left| b \right|{D_{\sigma ,m - 1}}({Q_{{\lambda _1}}}(P_{m,n - 1}^{{\Lambda _1}}f - f))\\
&\to 0
\end{align*}
as $ n \to \infty $.

Similarly discussion also implies that
\[{D_{\sigma ,m - 1}}({Q_{{\lambda _2}}}({h_{m,n}} - f)) \to 0 \quad as \quad n \to \infty .\]
Consequently, we conclude that
\[\left\| {{h_{m,n}} - f} \right\|_{\mu ,m}^2 = \sum\limits_{k = 1}^s {{c_k}{D_{\sigma ,m - 1}}({Q_{{\lambda _k}}}({h_{m,n}} - f))}  \to 0 \quad as \quad n \to \infty .\]

This completes the induction and establishes a Fej\'er-type summability
scheme for higher-order weighted Dirichlet spaces associated with
finite Dirac measures.

\end{proof}

In summary, when the weight measure $\mu$ is a finite combination of Dirac masses,
the abstract approximation scheme developed earlier reduces to an explicit interpolation procedure.
The resulting modified Taylor polynomials admit closed-form expressions and converge
in the norm of $\widehat{\mathcal{H}}_{\mu,m}$, thereby providing a concrete and constructive realization
of Fej\'er-type summability in higher-order weighted Dirichlet spaces.
\\

\textbf{Data availability} This manuscript has no associated data.
	\\
	
	\textbf{Competing Interests} The authors have no relevant financial or non-financial interests to disclose.

	\begin{center}

\end{center}
\end{document}